\documentclass{article}
\usepackage[utf8]{inputenc}

\usepackage{bbm}
\usepackage{comment}
\usepackage{verbatim}
\usepackage{mathrsfs}
\usepackage{mathtools}
\usepackage{latexsym}
\usepackage{epsfig}
\usepackage{amsmath}
\usepackage[usenames,dvipsnames]{xcolor}
\usepackage{bbm}
\usepackage{graphics}
\usepackage[utf8]{inputenc}
\usepackage{amssymb}
\usepackage{amsthm}
\usepackage{amsopn}
\usepackage{amscd}
\usepackage[all,knot]{xy}
\xyoption{all}
\usepackage{rotating}
\usepackage{tikz}
\usetikzlibrary{calc}
\usepgflibrary{shapes.geometric}
\usepgflibrary{shapes.misc}
\usetikzlibrary{positioning}
\usetikzlibrary{decorations}
\usetikzlibrary{decorations.pathreplacing}
\usepackage{tikz}
\usepackage{tikz-cd}
\mathtoolsset{showonlyrefs}

\usepackage[hidelinks]{hyperref}

\theoremstyle{plain}
\newtheorem{thm}{Theorem}[section]
\newtheorem{lem}[thm]{Lemma}
\newtheorem{prop}[thm]{Proposition}
\newtheorem{cor}[thm]{Corollary}

\newtheorem*{thm*}{Theorem}
\newtheorem*{lem*}{Lemma}
\newtheorem*{prop*}{Proposition}
\newtheorem*{cor*}{Corollary}

\theoremstyle{definition}
\newtheorem{defn}[thm]{Definition}
\newtheorem*{defn*}{Definition}

\newtheorem{ex*}{Example}
\newtheorem{rem}[thm]{Remark}
\newtheorem*{rem*}{Remark}

\newtheorem{notation}[thm]{Notation}{}
{}
{}
{}
\newtheorem*{assump*}{Assumption}

\theoremstyle{remark}
{}
{}
{}

{}

\def\to{\longrightarrow} 
\renewcommand{\xrightarrow}[1]{\stackrel{#1}{\to}}

%Special categories

%BB

\DeclareUnicodeCharacter{221E}{$\infty$}

%sf

\def\sfT{\mathsf{T}}

%mc

\def\mcE{\mathcal{E}}

\def\mcI{\mathcal{I}}
\def\mcJ{\mathcal{J}}

\def\mcM{\mathcal{M}}

\def\mcP{\mathcal{P}}
\def\mcQ{\mathcal{Q}}

\def\mcS{\mathcal{S}}
\def\mcT{\mathcal{T}}

\def\mcX{\mathcal{X}}

%bf

%xypic

\def\spe{\mathrm{sp}}

\def\unit{\mathbf{1}}

%general, i.e. cats and hom alg

%rings and modules

%operators for categories

%Derived functors and homs

%commutative algebra

%algebraic geometry

%tt geometry and tricats

\DeclareMathOperator{\Spc}{Spc}

\DeclareMathOperator{\supp}{supp}

\DeclareMathOperator{\thick}{thick}

%Rep theory

\definecolor{internationalkleinblue}{rgb}{0.0, 0.18, 0.65}

\definecolor{darkorange}{rgb}{1.00, 0.58, 0.00}

\newcommand\imCMsym[4][\mathord]{%
	\DeclareFontFamily{U} {#2}{}
	\DeclareFontShape{U}{#2}{m}{n}{
		<-6> #25
		<6-7> #26
		<7-8> #27
		<8-9> #28
		<9-10> #29
		<10-12> #210
		<12-> #212}{}
	\DeclareSymbolFont{CM#2} {U} {#2}{m}{n}
	\DeclareMathSymbol{#4}{#1}{CM#2}{#3}
}
\newcommand\alsoimCMsym[4][\mathord]{\DeclareMathSymbol{#4}{#1}{CM#2}{#3}}

\imCMsym{cmmi}{124}{\CMjmath}
\imCMsym[\mathop]{cmsy}{113}{\CMamalg}
\imCMsym[\mathop]{cmex}{96}{\CMcoprod}
\alsoimCMsym[\mathop]{cmex}{97}{\CMbigcoprod}

\usepackage{pgfgantt}

\usepackage[style=alphabetic,hyperref=true,maxbibnames=99]{biblatex}
\ExecuteBibliographyOptions{isbn=false,url=false,doi=false}
\addbibresource{bibbackup.bib}

\setcounter{tocdepth}{2}
\DeclareFieldFormat{postnote}{#1}
\title{Noncommutative tensor triangular geometry: classification via noetherian spectra}
\author{James Rowe}
\date{August 2023}

\begin{document}

\maketitle

\begin{abstract}
Given a monoidal triangulated category $\sfT$ with noetherian spectrum, we show that there is an order preserving bijection between the collection of all Thomason subsets of the non-commutative spectrum $\Spc(\sfT)$ and the collection of all thick two-sided semiprime ideals of $\sfT$. This provides an alternative to the hypotheses of \cite{noncomttbasics}, as well as the recent approach via completely prime ideals in \cite{completelyprimeclass}. By assuming the spectrum is noetherian, we show that it is indeed a spectral space, and that it is universal among all such spaces classifying the ideals in question.  
\end{abstract}
\section{Introduction}

One of the applications of tensor triangular geometry is the classification of various flavours of subcategories via particular topological spaces. While classifications had been computed in fields such as algebraic geometry and representation theory, Balmer provided the prototypical tt-theoretic classification \cite{BaSpec}. Since then the theory of classification has expanded to include the use residue functors, categorical actions and large spectra to further increase the range of subcategories which can be classified.

This article considers a different approach, building on the work of Nakano, Vashaw and Yakimov \cite{noncomttbasics}. Instead of tensor triangulated categories, we study the theory of \emph{monoidal} triangulated categories, or \emph{mt-categories}. In particular, we no longer require the tensor product $\otimes$ to be a symmetric functor. 
The classification theorem of \cite{noncomttbasics} applies to a large collection of \emph{weak support data} but in exchange for this scale carries with it a large collection of additional hypotheses. These include defining the support over all objects in a large mt-category and requiring desirable behaviour over closed subsets of the spectrum.

Various examples of interest can be shown to satisfy these conditions \cite{noncomttbasics}. However, when comparing to the tt-theoretic classification given by Balmer, the list of additional hypotheses in the monoidal case is significant. It is not immediately clear whether or not the usual universal support always classifies any thick ideals at all. Moreover, there is something uncomfortable in appealing to \emph{localising} subcategories when only attempting to classify \emph{thick} subcategories. Attempting to extend the usual support to apply to non-compact objects in a useful way is non-trivial and it is not obvious that the usual methods can be applied to the noncommutative setting.  
Recent work by Mallick and Ray in \cite{completelyprimeclass} classifies radical thick tensor ideals of mt-categories using a point free approach, under the assumption that every prime ideal is completely prime. That is, given a prime ideal $\mcP$, if there are objects $a,b$ such that $a\otimes b \in \mcP$, then $a \in \mcP$ or $b \in \mcP$. By making this assumption about complete primeness, the authors can dispense with the other restrictions present in the classification in \cite{noncomttbasics}. This also recovers the symmetric case as for tt-categories this condition is just the usual definition of being prime.
In this article we will instead focus on a different assumption:
\begin{assump*}
Given a mt-category $\sfT$, we assume that the non-commutative spectrum $\Spc(\sfT)$ is a noetherian topological space.
\end{assump*}

Under this noetherian assumption the usual definition of support can be extended to non-compact objects \cite[Theorem B.0.4]{nakano2022spectrum}. Moreover, if the spectrum has finite Krull dimension then this extended support satisfies a variant of the faithfulness property \cite[Theorem B.0.5]{nakano2022spectrum}.

Rather than trying to further refine this extended support, we will instead focus only on compact objects and thick subcategories. We follow the original work of Balmer on the symmetric case (see for instance \cite{BaSpec}) and prove analogous statements in the monoidal setting. With this framework we obtain the following classification:

\begin{thm*}
There is an order preserving bijection between the collection of all Thomason subsets of the spectrum $\Spc(\sfT)$ and the collection of all thick two-sided semiprime ideals of $\sfT$.
\end{thm*}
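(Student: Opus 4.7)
The plan is to adapt Balmer's original tt-classification argument (see \cite{BaSpec}) to the monoidal setting, with \emph{semiprimeness} taking the place of radicality and the noetherian assumption providing the topological leverage needed to work purely with compact objects. For a Thomason subset $Y \subseteq \Spc(\sfT)$ and a thick two-sided ideal $\mcI \subseteq \sfT$, I would define
\[
\sigma(Y) := \{a \in \sfT : \supp(a) \subseteq Y\}, \qquad \Sigma(\mcI) := \bigcup_{a \in \mcI} \supp(a),
\]
and aim to show these are mutually inverse order-preserving bijections between Thomason subsets and thick two-sided semiprime ideals. The first task is well-definedness: $\sigma(Y)$ is thick and two-sided by the standard support relations (in particular $\supp(a \otimes b) \subseteq \supp(a) \cap \supp(b)$), and it is semiprime because it is manifestly the intersection $\bigcap_{\mcP \notin Y} \mcP$ of primes; dually, $\Sigma(\mcI)$ is Thomason because each $\supp(a)$ has quasi-compact open complement $U(a) = \{\mcP : a \in \mcP\}$, and arbitrary unions of Thomason sets are Thomason.

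The easy direction is $\Sigma(\sigma(Y)) = Y$. The inclusion $\subseteq$ is immediate. For $\supseteq$, write $Y = \bigcup_i \supp(a_i)$ using the Thomason hypothesis, observe that each $a_i$ lies in $\sigma(Y)$, and conclude $\supp(a_i) \subseteq \Sigma(\sigma(Y))$ for every $i$.

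The content lies in the reverse composition $\sigma(\Sigma(\mcI)) = \mcI$. One inclusion is trivial. For the other, let $a \in \sigma(\Sigma(\mcI))$, so that $\supp(a) \subseteq \bigcup_{b \in \mcI} \supp(b)$. The noetherian hypothesis makes $\supp(a)$ quasi-compact, so finitely many $b_1,\dots,b_n \in \mcI$ cover it, and passing to $b := b_1 \oplus \cdots \oplus b_n \in \mcI$ yields a single element with $\supp(a) \subseteq \supp(b)$. Unpacking supports, this says every prime $\mcP \supseteq \langle b \rangle$ satisfies $a \in \mcP$, where $\langle b \rangle$ denotes the thick two-sided ideal generated by $b$; equivalently,
\[
a \in \bigcap_{\mcP \supseteq \langle b \rangle} \mcP.
\]
Since $\langle b \rangle \subseteq \mcI$ and $\mcI$ is semiprime, so equal to the intersection of primes above it, each of those primes appears in the display, and hence $a \in \mcI$.

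The main obstacle is twofold and concentrated in the two ``formal'' steps above: one must establish the correct noncommutative support relations (especially the interaction of $\supp$ with the non-symmetric $\otimes$ on both sides, and the invariance under triangles needed for thickness), and one must develop enough theory of two-sided primes in $\sfT$ to guarantee (i) that every semiprime ideal equals the intersection of the primes containing it, and (ii) that sufficiently many primes exist above any proper thick two-sided ideal $\langle b \rangle$ for the ``Nullstellensatz'' step to have content. In the symmetric case Balmer obtains these from the tensor-product formula $\supp(a \otimes b) = \supp(a) \cap \supp(b)$, which generally fails in the monoidal setting; replacing radicality by semiprimeness is exactly the device that sidesteps this failure, and I expect the bulk of the technical work to sit in verifying the prime-existence and semiprime-radical statements in this noncommutative framework.
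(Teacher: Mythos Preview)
Your overall strategy matches the paper's almost exactly: define the two maps, check well-definedness via $\sfT_Y=\bigcap_{\mcP\notin Y}\mcP$, and verify the two composites are identities. The step $\sigma(\Sigma(\mcI))=\mcI$ is handled in the paper precisely as you outline (it is Proposition~\ref{tysupporradical}: $\sfT_{\supp(\mcJ)}=\sqrt{\mcJ}$, and semiprime means $\mcJ=\sqrt{\mcJ}$), and your compactness-of-$\supp(a)$ reduction to a single $b$ is harmless but unnecessary.

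Where you go astray is in calling $\Sigma(\sigma(Y))=Y$ the ``easy'' direction. The sentence ``write $Y=\bigcup_i\supp(a_i)$ using the Thomason hypothesis'' is exactly the place where the noncommutative content hides, and it is \emph{not} justified by the Thomason hypothesis alone. A Thomason (equivalently, under noetherian, specialisation-closed) subset is a union of closed sets, and a closed set is a finite \emph{intersection} $\bigcap_{j=1}^n\supp(r_j)$ of basic closed sets. In the symmetric world $\supp(r_1)\cap\supp(r_2)=\supp(r_1\otimes r_2)$ and you are done; in the monoidal world this fails, and one only has
\[
\bigcap_{j=1}^{n}\supp(r_j)=\bigcup_{c_1,\dots,c_{n-1}\in\sfT}\supp(r_1\otimes c_1\otimes\cdots\otimes c_{n-1}\otimes r_n),
\]
which is Lemma~\ref{lemsuppintersection} (equivalently, the prime-ideal characterisation ``$A\otimes C\otimes B\in\mcP$ for all $C$ implies $A\in\mcP$ or $B\in\mcP$''). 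The paper's proof of Theorem~\ref{mtclassification} invokes this explicitly: given $\mcP\in Y$, pick a closed $Y_i\subseteq Y$ containing $\mcP$, write $Y_i=\bigcap_{j=1}^n\supp(r_j)$ by noetherianness, and then use the displayed formula to produce a single object $x=r_1\otimes c_1\otimes\cdots\otimes r_n$ with $\mcP\in\supp(x)\subseteq Y_i\subseteq Y$. So the ``noncommutative support relations'' you flag as the main obstacle are needed precisely in the step you treated as immediate, not in the other composite; once you insert Lemma~\ref{lemsuppintersection} there, your argument is complete and coincides with the paper's.
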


We also show that if we are given a classification via some other well-behaved support datum $(X,\sigma)$ then the universal map becomes a homeomorphism.

\begin{thm*}
Suppose $(X,\sigma)$ is a support datum which classifies all thick two-sided semiprime tensor ideals of $\sfT$, and the space $X$ is noetherian and $T_0$. Then the universal map $f_\sigma: X \xrightarrow{} \Spc(\sfT)$ is a homeomorphism.
\end{thm*}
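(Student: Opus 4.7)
The plan is to use the hypothesis that both $(X,\sigma)$ and the universal support on $\Spc(\sfT)$ classify the same collection of thick two-sided semiprime ideals of $\sfT$ to force $f_\sigma$ to be a lattice isomorphism on closed subsets, then descend this to a homeomorphism on points using $T_0$ and noetherianness. First I would record that $f_\sigma$ is continuous by construction, since $f_\sigma^{-1}(\supp(a)) = \sigma(a)$ is closed in $X$ for every $a \in \sfT$, and these supports generate the Thomason subsets of $\Spc(\sfT)$.

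Next I would assemble the bijection on closed subsets. Noetherianness of $X$ means every closed subset is Thomason (the complement of a quasi-compact open), so the assumed classifying property for $(X,\sigma)$ gives an order-preserving bijection between closed subsets of $X$ and thick two-sided semiprime ideals of $\sfT$. The preceding classification theorem provides the analogous bijection for Thomason (hence closed) subsets of $\Spc(\sfT)$. Composing yields an order-preserving bijection $\Phi$ between the two lattices of closed subsets, and I would verify that $\Phi^{-1}(Z) = f_\sigma^{-1}(Z)$ and $\Phi(C) = \overline{f_\sigma(C)}$ by chasing the ideal $\{a \in \sfT : \sigma(a) \subseteq C\}$ through both sides. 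Since $\Phi$ is a lattice isomorphism it preserves irreducibility, and irreducible closed subsets of $\Spc(\sfT)$ correspond to prime ideals (as $\Spc(\sfT)$ is sober by the earlier results), so the same correspondence holds on the $X$ side.

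For injectivity I would use $T_0$: if $f_\sigma(x) = f_\sigma(y)$ then $\Phi(\overline{\{x\}}) = \overline{\{f_\sigma(x)\}} = \overline{\{f_\sigma(y)\}} = \Phi(\overline{\{y\}})$, so $\overline{\{x\}} = \overline{\{y\}}$ and hence $x = y$. Surjectivity is the main obstacle: given $P \in \Spc(\sfT)$, the irreducible closed $C_P := \Phi^{-1}(\overline{\{P\}})$ need not a priori be the closure of a single point, since noetherian plus $T_0$ does not imply sober in general. The argument must leverage the classification itself to produce a generic point of $C_P$, equivalently, to show that $X$ is forced to be sober under these hypotheses; the candidate $x$ is the unique point whose closure corresponds under $\Phi$ to $\overline{\{P\}}$, and one shows $f_\sigma(x)=P$ via the matching of associated prime ideals. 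Once bijectivity is established, closedness of $f_\sigma$ is immediate because $f_\sigma(C) = \Phi(C)$ is closed for every closed $C$, so $f_\sigma$ is a continuous closed bijection, hence the desired homeomorphism.
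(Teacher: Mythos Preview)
Your overall architecture is sound and quite close to the paper's: set up the order-isomorphism between closed (equivalently, Thomason) subsets on both sides via the two classifications, deduce injectivity from $T_0$, and finish with closedness from $f_\sigma(\sigma(t))=\supp(t)$ together with the fact that every closed subset of $X$ is a $\sigma(t)$. Your verification that $\Phi^{-1}=f_\sigma^{-1}$ on closed sets is correct (write $Z=\bigcup_{a\in\sfT_Z}\supp(a)$ and pull back), and the injectivity step via $\Phi(\overline{\{x\}})=\overline{\{f_\sigma(x)\}}$ goes through.

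The genuine gap is surjectivity. You correctly diagnose the issue: knowing that $C_{\mcP}:=\Phi^{-1}(\overline{\{\mcP\}})$ is irreducible does not, for a merely noetherian $T_0$ space, produce a generic point. But then you write ``the candidate $x$ is the unique point whose closure corresponds under $\Phi$ to $\overline{\{\mcP\}}$'' --- this is precisely the existence statement you need to \emph{prove}, so the argument as written is circular. Nothing in your outline actually manufactures the point $x$.

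The paper supplies the missing mechanism, and it uses the \emph{primeness} of $\mcP$ in an essential way rather than just the lattice isomorphism. Set $Y=\Theta^{-1}(\mcP)$. For $x,y\in X\setminus Y$ write $\overline{\{x\}}=\sigma(s)$, $\overline{\{y\}}=\sigma(t)$; let $\mcI,\mcJ$ be the thick two-sided ideals they generate. Since neither $\sigma(\mcI)$ nor $\sigma(\mcJ)$ is contained in $Y$, neither ideal lies in $\mcP$, so by primeness $\mcI\otimes\mcJ\not\subseteq\mcP$, hence $\sigma(\mcI\otimes\mcJ)\not\subseteq Y$. This yields a point $z\in X\setminus Y$ with $\overline{\{z\}}\subseteq\overline{\{x\}}\cap\overline{\{y\}}$. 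Thus the family $\{\overline{\{x\}}:x\in X\setminus Y\}$ is downward directed, and noetherianness of $X$ then forces a \emph{minimum}, say $\overline{\{x_0\}}$. One checks $X\setminus Y=\{y:x_0\in\overline{\{y\}}\}$, i.e.\ $Y=Y(x_0)$, and hence $\mcP=\Theta(Y(x_0))=f_\sigma(x_0)$. That directedness-plus-DCC step is the idea you are missing; once you insert it, your proof is complete and essentially matches the paper's.
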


In proving these we also show that under the noetherian assumption, the noncommutative spectrum $\Spc(\sfT)$ is indeed a \emph{spectral} space.
\newline
\\
\textbf{Acknowledgements.} Many thanks to Greg Stevenson and Kent Vashaw for their insightful comments and suggestions. 

\section{Preliminaries on mt-categories}

We will start with the key definitions related to mt-categories and their spectra as introduced in \cite{noncomttbasics}.

\begin{defn}
An \emph{essentially small monoidal triangulated category} is a triple of the form $(\sfT, \otimes, \unit)$, where $\sfT$ is an essentially small triangulated category and $(\otimes, \unit)$ is a monoidal structure on $\sfT$ such that $\otimes$ is an exact functor in each variable. In other words, it is a tensor triangulated category but the tensor need not be symmetric. 

We will abbreviate ``monoidal triangulated category" to ``mt-category". 
\end{defn}

The following definition contains the different notions of prime in the noncommutative setting.
\begin{defn}\cite[1.2]{noncomttbasics}
	Let $\sfT$ be an essentially small mt-category.
	\begin{enumerate}
		\item A \emph{thick two-sided ideal} of $\sfT$ is a thick subcategory closed under left and right tensoring with arbitrary objects of $\sfT$.
		\item A \emph{prime ideal} of $\sfT$ is a proper thick ideal $\mcP$ such that $\mcI \otimes \mcJ \subseteq \mcP$ implies $\mcI \subseteq \mcP$ or $\mcJ \subseteq \mcP$ for all thick two-sided ideals $\mcI$ and $\mcJ$ or $\sfT$.  
		\item A \emph{semiprime} ideal of $\sfT$ is an intersection of prime ideals of $\sfT$.  
		\item A \emph{completely prime} ideal of $\sfT$ is a proper thick ideal $\mcP$ such that $A \otimes B \in \mcP$ implies $A \in \mcP$ or $B \in \mcP$ for all objects $A, B \in \sfT$. 
	\end{enumerate}
\end{defn}

\begin{defn}\label{noncomspecdef}\cite{noncomttbasics}
	The \emph{noncommutative Balmer spectrum} of an essentially small mt-category $\sfT$ is the topological space of prime ideals of $\sfT$. We denote the noncommutative spectrum by $\Spc(\sfT)$. The topology on $\Spc(\sfT)$ is generated by the collection of closed subsets
	
\[V(S)=\{\mcP \in \Spc(\sfT) \ \vert \ \mcP \cap S = \varnothing\}\]
for all subsets $S$ of $\sfT$. 
\end{defn}

We have the following characterisation of prime ideals in terms of objects rather than ideals:
\begin{thm}\emph{\cite[1.2.1]{noncomttbasics}}
	Let $\sfT$ be an essentially small mt-category. Then the following hold:
	\begin{enumerate}
		\item A proper thick ideal $\mcP$ of $\sfT$ is prime if and only if, given objects $A,B \in \sfT$, we have $A \otimes C \otimes B \in \mcP$, for all $C \in \sfT$ implies $A \in \mcP$ or $B \in \mcP$. 
		\item A proper thick ideal $\mcP$ of $\sfT$ is semiprime if and only if, given $A \in \sfT$, we have $A \otimes C \otimes A \in \mcP$, for all $C \in \sfT$ implies $A \in \mcP$. 
		\item The noncommutative Balmer spectrum $\Spc(\sfT)$ is always nonempty. 
	\end{enumerate}
\end{thm}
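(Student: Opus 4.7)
The plan is to establish the three assertions in turn, each reducing to a manipulation at the level of objects together with the two-sided tensor ideal structure.

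For (1), the forward direction: assuming $A\otimes C\otimes B\in\mcP$ for all $C$, set $\mcI=\langle A\rangle$ and $\mcJ=\langle B\rangle$ as the thick two-sided ideals generated by $A$ and $B$. The generators of $\mcI\otimes\mcJ$ are $X\otimes Y$ with $X\in\mcI$, $Y\in\mcJ$; a thick-subcategory induction on both variables reduces to the base case $X=U\otimes A\otimes V$ and $Y=U'\otimes B\otimes V'$, where $X\otimes Y=U\otimes A\otimes(V\otimes U')\otimes B\otimes V'\in\mcP$ by the hypothesis with $C=V\otimes U'$ and two-sidedness of $\mcP$. Primeness of $\mcP$ then yields $A\in\mcP$ or $B\in\mcP$. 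For the converse, suppose the objectwise condition fails: given ideals $\mcI,\mcJ$ with $\mcI\otimes\mcJ\subseteq\mcP$ and $A\in\mcI\setminus\mcP$, $B\in\mcJ\setminus\mcP$, the element $A\otimes C\otimes B$ lies in $\mcI\otimes\mcJ\subseteq\mcP$ for every $C$ (using $C\otimes B\in\mcJ$), contradicting the hypothesis.

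For (2), ``only if'' is immediate from (1) with $B=A$, applied inside each prime in the intersection. For ``if'', given $A\notin\mcP$ one must produce a prime $\mcQ\supseteq\mcP$ with $A\notin\mcQ$. I would follow the classical noncommutative ring-theory template: build an m-system $S$ containing $A$ and disjoint from $\mcP$ by inductively setting $A_0=A$ and $A_{n+1}=A_n\otimes C_n\otimes A_n$ with $C_n$ chosen using the hypothesis on $\mcP$ so that $A_{n+1}\notin\mcP$, and checking via the nesting $A_n=A_m\otimes(\ldots)\otimes A_m$ for $n\geq m$ that $S=\{A_n\}$ satisfies the full pairwise m-system property. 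Zorn then yields a thick two-sided ideal $\mcQ\supseteq\mcP$ maximal with respect to $\mcQ\cap S=\varnothing$. To show $\mcQ$ is prime, take $X,Y\notin\mcQ$; maximality gives $A_m\in\langle\mcQ,X\rangle$ and $A_n\in\langle\mcQ,Y\rangle$ for some $m,n$, and the m-system property combined with the containment $\langle\mcQ,X\rangle\otimes\langle\mcQ,Y\rangle\subseteq\langle\mcQ,\{X\otimes D\otimes Y:D\in\sfT\}\rangle$ forces some $X\otimes D\otimes Y\notin\mcQ$; part (1) then concludes.

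For (3), the zero ideal is a proper thick two-sided ideal since $\unit\neq 0$ and the thick two-sided ideal generated by $\unit$ is all of $\sfT$. The poset of proper thick two-sided ideals is nonempty and closed under unions of chains, so Zorn furnishes a maximal proper thick two-sided ideal $\mcM$. To see that $\mcM$ is prime I again invoke (1): if $X,Y\notin\mcM$ then $\langle\mcM,X\rangle=\langle\mcM,Y\rangle=\sfT$, hence $\unit\in\sfT\otimes\sfT=\langle\mcM,X\rangle\otimes\langle\mcM,Y\rangle\subseteq\langle\mcM,\{X\otimes C\otimes Y:C\in\sfT\}\rangle$; if every such $X\otimes C\otimes Y$ lay in $\mcM$ the right-hand ideal would equal $\mcM$, forcing $\unit\in\mcM$ and contradicting properness. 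The main obstacle I anticipate is the m-system construction in (2): ensuring that the inductively built sequence $\{A_n\}$ has the full pairwise m-system property, not merely closure under squaring, requires the careful combinatorial nesting argument analogous to Lam's passage from n-systems to m-systems in noncommutative ring theory, and it is here that the semiprime hypothesis is used most delicately.
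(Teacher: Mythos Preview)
The paper does not prove this theorem; it is quoted verbatim from \cite[1.2.1]{noncomttbasics} as a preliminary result, so there is no in-paper argument to compare your proposal against. That said, your outline is correct and follows exactly the standard noncommutative-ring-theoretic template (analogous to Lam's treatment of prime and semiprime ideals via m-systems), which is the approach taken in \cite{noncomttbasics} as well. The Zorn-type step you invoke in parts (2) and (3)---maximising an ideal disjoint from a multiplicative set and showing the result is prime---is precisely Theorem~\ref{monoidalavoidance}, which the present paper also imports from \cite{noncomttbasics}.

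Two minor comments. First, in the converse of (1) you write ``suppose the objectwise condition fails'' when you mean ``suppose primeness fails'' (you are proving objectwise $\Rightarrow$ prime by contraposition); the argument you then give is correct. Second, the m-system verification you flag as the delicate point does go through: for $n> m$ one has $A_n=A_m\otimes E\otimes A_m$ for some $E$, whence
\[
A_{n+1}=A_n\otimes C_n\otimes A_n=A_m\otimes(E\otimes A_m\otimes C_n)\otimes A_n=A_n\otimes(C_n\otimes A_m\otimes E)\otimes A_m,
\]
so $A_{n+1}$ has the form $A_m\otimes D\otimes A_n$ and also $A_n\otimes D'\otimes A_m$, giving the pairwise m-system property in both orders.
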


Just like in the symmetric setting, there is a universal support datum which is the noncommutative analogue to Balmer's notion of support.

\begin{defn}\cite{noncomttbasics}
	For an essentially small mt-category $\sfT$, the \emph{small noncommutative support} of an object $t$ is given by 
	\[ \supp(t)=\{\mcP \in \Spc(\sfT) \ \vert \ t \not\in \mcP\}.\]
 Note that this is just the restriction to objects of the map $V$ of Definition \ref{noncomspecdef}. 
\end{defn}

\begin{lem}\emph{\cite[4.1.2]{noncomttbasics}}\label{smallnoncomsupp}
	The small noncommutative support satisfies the following properties:
	\begin{enumerate}
		\item $\supp(0)=\varnothing$ and $\supp(\unit)=\Spc(\sfT)$.
		\item $\supp(t \oplus s) = \supp(t) \cup \supp(s),$ for all $t,s \in \sfT$.
		\item $\supp(\Sigma t) = \supp(t)$.
		\item If $t\to s \to r \to \Sigma t$ is a distinguished triangle, then $\supp(t) \subseteq \supp(s) \cup \supp(r).$
		\item $\bigcup_{r \in \sfT}\supp(t \otimes r \otimes s)=\supp(t) \cap \supp(s)$ for all $t, s \in \sfT$.
		\item For all $t \in \sfT$ the subset $\supp(t)$ is closed.
	\end{enumerate}
\end{lem}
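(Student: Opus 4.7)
The plan is to verify each of the six properties by unfolding the definition $\supp(t) = \{\mcP \in \Spc(\sfT) \mid t \notin \mcP\}$ and exploiting that every prime ideal is a proper thick two-sided ideal. Properties (1)--(4) and (6) reduce to standard facts about thick ideals, while (5) is the substantive claim and will draw on the object-level characterisation of primeness recalled in the preceding theorem.

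First I would dispatch the routine items. For (1), the zero object lies in every thick subcategory, so $\supp(0) = \varnothing$; conversely, if $\unit \in \mcP$ then every $t \cong t \otimes \unit$ lies in $\mcP$, contradicting properness, so $\supp(\unit) = \Spc(\sfT)$. Property (3) is immediate since $\Sigma$ restricts to an autoequivalence on any thick subcategory, so $t \in \mcP$ iff $\Sigma t \in \mcP$. Property (2) follows because thick subcategories are closed under direct sums and summands, so $t \oplus s \in \mcP$ iff both $t \in \mcP$ and $s \in \mcP$, which is exactly the negation of $\mcP \in \supp(t) \cup \supp(s)$. For (4), if both $s, r \in \mcP$ then the two-out-of-three property of thick subcategories applied to the distinguished triangle forces $t \in \mcP$, which contraposes to the stated inclusion. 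Property (6) is tautological: $\supp(t) = V(\{t\})$ is closed by the very definition of the topology on $\Spc(\sfT)$.

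The content of the lemma sits in property (5). The inclusion $\bigcup_r \supp(t \otimes r \otimes s) \subseteq \supp(t) \cap \supp(s)$ is the contrapositive of the fact that each $\mcP$ is a two-sided ideal: if $t \in \mcP$ or $s \in \mcP$, then $t \otimes r \otimes s \in \mcP$ for every $r \in \sfT$, so $\mcP$ is absent from the left-hand union. The reverse inclusion is the contrapositive of the object-level characterisation of prime ideals, which states that if $A \otimes C \otimes B \in \mcP$ for all $C$ then $A \in \mcP$ or $B \in \mcP$. Thus, if $\mcP \in \supp(t) \cap \supp(s)$, there must be some $r \in \sfT$ with $t \otimes r \otimes s \notin \mcP$, placing $\mcP$ in the left-hand union.

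The main obstacle is entirely inside property (5), and specifically in the appeal to the object-level characterisation of primeness: in the noncommutative setting primeness is phrased in terms of ideals, and the passage to objects (which is exactly what makes the support ``tensor-multiplicative'') requires inserting arbitrary $C$ between $A$ and $B$, not just checking $A \otimes B$. Since we may cite that characterisation from the preceding theorem, the remainder of the argument is essentially bookkeeping; if one wished to reprove the characterisation, the hard step would be constructing the thick two-sided ideals generated by $A$ and $B$ and verifying that their product lands inside $\mcP$, which is where the non-symmetry genuinely enters.
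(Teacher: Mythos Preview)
Your proof is correct in every detail. Note, however, that the paper does not actually supply a proof of this lemma: it is stated with a citation to \cite[4.1.2]{noncomttbasics} and left unproved, so there is nothing in the paper to compare against. Your argument is the standard verification and is presumably what the cited reference contains; in particular, your use of the object-level characterisation of primeness for property~(5) is exactly the intended mechanism, and it is available here since that characterisation is recalled as the preceding theorem.
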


The properties of the usual support become the definition of a \emph{support datum}:

\begin{defn}\cite[4.1.1]{noncomttbasics}\label{supportdatumdefn}
A support datum on a mt-category $\sfT$ is a pair $(X,\sigma)$ where $X$ is a topological space and $\sigma$ is an assignment $\sigma: \sfT \xrightarrow{} \mcX$ where $\mcX$ is the collection of all closed subsets of $X$, such that $\sigma$ satisfies the following additional properties:
\begin{enumerate}
    \item $\sigma(0)=\varnothing$ and $\sigma(\unit)=X$;
    \item $\sigma(a\oplus b)=\sigma(a)\cup \sigma(b)$, for all $a,b \in \sfT$;
    \item $\sigma(\Sigma a)=\sigma(a)$ for all $a \in \sfT$;
    \item If $a\xrightarrow{} b \xrightarrow{} c \xrightarrow{} \Sigma a$ is a distinguished triangle in $\sfT$, then $\sigma(a) \subseteq \sigma(b) \cup \sigma(c)$;
    \item $\bigcup_{c \in \sfT}\sigma(a\otimes c \otimes b) = \sigma(a) \cap \sigma(b)$.
\end{enumerate}
\end{defn}

Note that in \cite{noncomttbasics} it is not required that support data take values in \emph{closed subsets}. We will require that the supports be closed when we compare candidate support data to the universal support.

\begin{lem}\label{lemsuppintersection}
    Given a support datum $(X,\sigma)$ and a finite collection of objects $r_1,r_2,\dots,r_n$, we have
    \[
    \bigcap_{i=1}^{n}\sigma(r_i)=\bigcup_{c_1,c_2,\dots,c_{n-1} \in \sfT}\sigma(r_1 \otimes c_1 \otimes r_2 \otimes c_2 \otimes \cdots \otimes c_{n-1} \otimes r_n)
    \]
\end{lem}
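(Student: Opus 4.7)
The plan is to proceed by induction on $n$. The base case $n=2$ is exactly axiom (5) in Definition \ref{supportdatumdefn}, and the case $n=1$ is trivial.

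For the inductive step, I would assume the identity holds for $n-1$ and compute
\begin{align}
\bigcap_{i=1}^{n}\sigma(r_i) &= \left(\bigcap_{i=1}^{n-1}\sigma(r_i)\right) \cap \sigma(r_n) \\
&= \left(\bigcup_{c_1,\dots,c_{n-2} \in \sfT}\sigma(r_1 \otimes c_1 \otimes \cdots \otimes c_{n-2} \otimes r_{n-1})\right) \cap \sigma(r_n),
\end{align}
using the induction hypothesis in the second equality. Next, I would distribute the intersection with $\sigma(r_n)$ across the union (which is valid set-theoretically) and then, for each fixed tuple $(c_1,\dots,c_{n-2})$, apply axiom (5) to the pair consisting of $a = r_1 \otimes c_1 \otimes \cdots \otimes c_{n-2} \otimes r_{n-1}$ and $b = r_n$. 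This produces an inner union over $c_{n-1} \in \sfT$, and combining the nested unions into a single union over $(c_1, \dots, c_{n-1})$ yields the required expression.

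Since the argument is a straightforward induction, I do not anticipate any real obstacle; the only thing to be careful about is the distributivity of intersection over an arbitrary union and the bookkeeping on indices. No triangulated or monoidal subtleties enter beyond the single application of axiom (5) at each inductive step.
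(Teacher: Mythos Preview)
Your proposal is correct and follows essentially the same approach as the paper's own proof: induction on $n$ with base case given by axiom (5), then distributing $\cap\,\sigma(r_n)$ over the union from the induction hypothesis and applying axiom (5) once more to each term. The paper's argument is line-for-line what you describe.
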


\begin{proof}
    We proceed by induction, where the base case $\sigma(r_1)\cap\sigma(r_2)$ is satisfied by Definition \ref{supportdatumdefn}. Suppose the result holds for the $n-1$ case. Then
    \begin{align*}
       \bigcap_{i=1}^{n}\sigma(r_i)&=\bigg(\bigcap_{i=1}^{n-1}\sigma(r_i)\bigg) \cap \sigma(r_n)\\ 
       &=\bigg(\bigcup_{c_1,c_2,\dots,c_{n-2} \in \sfT}\sigma(r_1 \otimes c_1 \otimes \cdots \otimes c_{n-2} \otimes r_{n-1})\bigg)\cap\sigma(r_n)\\
       &=\bigcup_{c_1,c_2,\dots,c_{n-2} \in \sfT}\bigg(\sigma(r_1 \otimes c_1 \otimes \cdots \otimes c_{n-2} \otimes r_{n-1})\cap \sigma(r_n)\bigg)\\
       &=\bigcup_{c_1,c_2,\dots,c_{n-2} \in \sfT}\bigcup_{c_{n-1}\in\sfT}\sigma(r_1 \otimes c_1 \otimes r_2 \otimes c_2 \otimes \cdots \otimes c_{n-1} \otimes r_n)\\
       &=\bigcup_{c_1,c_2,\dots,c_{n-1} \in \sfT}\sigma(r_1 \otimes c_1 \otimes r_2 \otimes c_2 \otimes \cdots \otimes c_{n-1} \otimes r_n)
    \end{align*}
    as required.
\end{proof}

\section{The noncommutative spectrum is spectral}

We wish to show that the spectrum of a monoidal triangulated category is a spectral space. Let us make it clear what this actually means:

\begin{defn}
Let $X$ be a topological space and let $K(X)$ denote the set of all quasi-compact open subsets of $X$. The topological space $X$ is \emph{spectral} if it satisfies all of the following conditions:
\begin{enumerate}
    \item $X$ is quasi-compact and $T_0$. By $T_0$ we mean that given any two points $x,y \in X$ there is an open subset of $X$ containing one of these points, but not the other.
    \item $K(X)$ is a basis of open subsets for $X$.
    \item $K(X)$ is closed under finite intersections.
    \item $X$ is a sober space. That is, every irreducible closed subset of $X$ has a necessarily unique generic point.
\end{enumerate}

\end{defn}

We will rely on assuming that the spectrum is a noetherian topological space:

\begin{defn}
A topological space $X$ is noetherian if any of the following equivalent conditions hold:
\begin{enumerate}
    \item $X$ satisifies the descending chain condition for closed subsets. That is, for any sequence 
    \[
    Y_1 \supseteq Y_2 \supseteq \cdots
    \]
    of closed subsets $Y_i$ of $X$, there exists an integer $m$ such that for all integers $n\geq m$ we have $Y_m=Y_n$.
    \item Every subspace of $X$ is quasi-compact.
    \item Every open subset of $X$ is quasi-compact.
    %\item Every subset of $X$ is quasi-compact.
\end{enumerate}
\end{defn}

Let $\sfT$ be an essentially small monoidal triangulated category with spectrum $\Spc(\sfT)$. We aim to show that $\Spc(\sfT)$ is a spectral topological space.
As seen in Definition \ref{noncomspecdef}, a basis of closed sets is given by $
\{V(\mcS) \ \vert \ \mcS \subseteq \sfT\}
$, where
\[
V(\mcS)=\{\mcP \in \Spc(\sfT) \ \vert \ \mcP \cap \mcS = \varnothing\}.
\]
The corresponding basis of open sets is $
\{U(\mcS) \ \vert \ \mcS \subseteq \sfT\}
$ where
\[
U(\mcS)=\{\mcP \in \Spc(\sfT) \ \vert \ \mcP \cap \mcS \neq \varnothing\}.
\]
Immediately we have $V(\mcS)=\bigcap_{s \in \mcS}V(s)$ and $U(\mcS)=\bigcup_{s \in \mcS}U(s)$. Therefore the sets of the form $V(s)=\supp(s)$ form a basis of closed sets for the topology, while the sets of the form $U(s)$ form a basis of open sets for the topology.

Throughout given a collection of objects $\mcE$ define $\supp(\mcE)=\bigcup_{s \in \mcE}\supp(s)$.

\begin{lem}\label{basicclosures}
Let $Y \subseteq \Spc(\sfT)$. Then the closure of $Y$ is given by
\[
\overline{Y}=\bigcap_{Y \subseteq \supp(t)}\supp(t).
\]
\end{lem}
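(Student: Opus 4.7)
The plan is to exploit the fact that the sets $\supp(t)$, as $t$ ranges over objects of $\sfT$, form a basis of closed subsets for the topology on $\Spc(\sfT)$. Recall from Definition \ref{noncomspecdef} that the topology is generated by the sets $V(\mcS)$, and we have already noted $V(\mcS) = \bigcap_{s \in \mcS} V(s) = \bigcap_{s \in \mcS} \supp(s)$. Moreover, by property (2) of Lemma \ref{smallnoncomsupp} we have $\supp(s) \cup \supp(s') = \supp(s \oplus s')$, so the collection $\{\supp(t) \mid t \in \sfT\}$ is closed under finite unions. Consequently every closed subset of $\Spc(\sfT)$ is an intersection of basic closed sets of the form $\supp(t)$.

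For the inclusion $\overline{Y} \subseteq \bigcap_{Y \subseteq \supp(t)} \supp(t)$, observe that every $\supp(t)$ occurring in this intersection is by definition a closed subset of $\Spc(\sfT)$ which contains $Y$. Since $\overline{Y}$ is the smallest such closed subset, it is contained in each such $\supp(t)$, and thus contained in their intersection.

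For the reverse inclusion, take any closed $C \subseteq \Spc(\sfT)$ with $Y \subseteq C$. By the basis observation we may write $C = \bigcap_{\alpha \in A} \supp(t_\alpha)$ for some family $(t_\alpha)_{\alpha \in A}$ of objects. Then $Y \subseteq C \subseteq \supp(t_\alpha)$ for every $\alpha$, so each $\supp(t_\alpha)$ is one of the sets appearing in the intersection on the right-hand side. Hence $\bigcap_{Y \subseteq \supp(t)} \supp(t) \subseteq \supp(t_\alpha)$ for every $\alpha$, and intersecting over $\alpha$ gives $\bigcap_{Y \subseteq \supp(t)} \supp(t) \subseteq C$. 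Taking the intersection over all closed $C$ containing $Y$ yields the desired inclusion, since $\overline{Y}$ is precisely this intersection.

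The argument is almost purely formal once the basis property of $\{\supp(t)\}$ is in hand, so there is no real obstacle; the only substantive ingredient is the translation between the generating family $\{V(\mcS)\}$ and the single-object basis $\{\supp(t)\}$, which is supplied by closure under finite unions via direct sum.
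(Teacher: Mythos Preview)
Your proof is correct and follows exactly the approach the paper takes: the paper's proof is the single line ``Immediate from the fact that the $\supp(s)$ are a basis of closed sets for the topology on $\Spc(\sfT)$,'' and you have simply unpacked this. One minor note: the closure-under-finite-unions observation via $\supp(s)\cup\supp(s')=\supp(s\oplus s')$ is not actually needed, since $V(\mcS)=\bigcap_{s\in\mcS}\supp(s)$ already shows every closed set is an intersection of sets of the form $\supp(t)$, which is all your argument uses.
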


\begin{proof}
Immediate from the fact that the $\supp(s)$ are a basis of closed sets for the topology on $\Spc(\sfT)$. 
\end{proof}

\begin{prop}\label{mttzero}
For any point $\mcP \in \Spc(\sfT)$, the closure of $\mcP$ is given by
\[
\overline{\{\mcP\}}=\{\mcQ \in \Spc(\sfT) \ \vert \ \mcQ \subseteq \mcP\}.
\]
In particular, if $\overline{\{\mcP_1\}}=\overline{\{\mcP_2\}}$ then $\mcP_1=\mcP_2$. That is, the space $\Spc(\sfT)$ is $T_0$.
\end{prop}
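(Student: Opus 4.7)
The plan is to apply Lemma \ref{basicclosures} and then unwind the definitions of $\supp$ and $V$ carefully. By Lemma \ref{basicclosures}, we have
\[
\overline{\{\mcP\}} = \bigcap_{\mcP \in \supp(t)} \supp(t).
\]
Since $\supp(t) = \{\mcQ \in \Spc(\sfT) \mid t \notin \mcQ\}$, the condition $\mcP \in \supp(t)$ is equivalent to $t \notin \mcP$. Thus the closure rewrites as
\[
\overline{\{\mcP\}} = \bigcap_{t \notin \mcP} \supp(t) = \{\mcQ \in \Spc(\sfT) \mid t \notin \mcQ \text{ for all } t \notin \mcP\}.
\]

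The key observation is then purely logical: the condition ``$t \notin \mcQ$ for every $t \notin \mcP$'' is precisely the contrapositive of ``$t \in \mcQ \implies t \in \mcP$'', i.e.\ $\mcQ \subseteq \mcP$. This gives the desired equality $\overline{\{\mcP\}} = \{\mcQ \in \Spc(\sfT) \mid \mcQ \subseteq \mcP\}$.

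For the $T_0$ statement, I would deduce it formally from this description of closures. Suppose $\overline{\{\mcP_1\}} = \overline{\{\mcP_2\}}$. Since $\mcP_1 \in \overline{\{\mcP_1\}} = \overline{\{\mcP_2\}}$, we obtain $\mcP_1 \subseteq \mcP_2$; symmetrically $\mcP_2 \subseteq \mcP_1$, so $\mcP_1 = \mcP_2$. Because distinct points thus have distinct closures, $\Spc(\sfT)$ is $T_0$.

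I do not expect any serious obstacle: the argument is essentially a double application of the definitions of $V(-)$ and $\supp(-)$. The only thing one must be careful about is the direction of inclusion when swapping between the complement formulation ($t \notin \mcQ$) and the containment formulation ($\mcQ \subseteq \mcP$), which is why taking the contrapositive is the cleanest step to spell out.
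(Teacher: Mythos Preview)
Your proposal is correct and essentially the same as the paper's argument. The paper computes the closure directly as $V(\mcS_0)$ with $\mcS_0=\sfT\setminus\mcP$, showing it is the smallest basic closed set containing $\mcP$; since $V(\mcS_0)=\bigcap_{t\notin\mcP}\supp(t)$, this is exactly your intersection obtained from Lemma~\ref{basicclosures}, and the contrapositive step you spell out is the same identification $\mcQ\cap\mcS_0=\varnothing \iff \mcQ\subseteq\mcP$.
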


\begin{proof}
The proof is identical to \cite[2.9]{BaSpec}. Fix a prime ideal $\mcP$. Consider the set $\mcS_0=\sfT\setminus \mcP$ and the associated basic closed subset $V(\mcS_0)=\{\mcQ \ \vert \mcQ \cap \mcS_0=\varnothing\}$. Clearly $\mcP \in V(\mcS_0)$. If there is a subset $\mcS \subseteq \sfT$ such that $\mcP \in V(\mcS)$, then $\mcS \subseteq \mcS_0$ and so $V(\mcS_0) \subseteq V(\mcS)$. Therefore $V(\mcS_0)$ is the smallest closed subset containing $\mcP$ and is the closure of $\mcP$. We have
\[
\overline{\{\mcP\}}=V(\mcS_0)=\{\mcQ \in \Spc(\sfT) \ \vert \ \mcQ \subseteq \mcP\}.
\]
The fact that $\Spc(\sfT)$ is $T_0$ follows immediately.
\end{proof}

We will make use of the following theorem from \cite{noncomttbasics}, which is the non-symmetric version of \cite[2.2]{BaSpec}.

\begin{thm}\emph{\cite[3.2.3]{noncomttbasics}}\label{monoidalavoidance}
Suppose $\mcM$ is a multiplicative subset of $\sfT$ and suppose $\mcI$ is a proper thick two-sided tensor ideal of $\sfT$ such that $\mcI \cap \mcM = \varnothing$. The set
\[
X(\mcM,\mcI)=\{\mcJ\text{ a thick two-sided tensor ideal of }\sfT \ \vert \ \mcI \subseteq \mcJ,\ \mcJ \cap \mcM = \varnothing\}
\]
has a maximal element, and moreover this maximal element is prime.
\end{thm}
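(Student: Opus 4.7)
The plan is a standard Zorn's lemma argument, modelled on the proof of \cite[2.2]{BaSpec} in the symmetric case, but with extra care taken for two-sided ideals and the non-commutative tensor product.

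First I would verify that $X(\mcM,\mcI)$, ordered by inclusion, satisfies the hypothesis of Zorn's lemma. It is nonempty because $\mcI \in X(\mcM,\mcI)$. Given a nonempty chain $\{\mcJ_\alpha\}$ in $X(\mcM,\mcI)$, the union $\mcJ = \bigcup_\alpha \mcJ_\alpha$ is again a thick two-sided ideal, since each of the defining closure properties (shifts, cones, direct summands, and tensoring on either side with arbitrary objects of $\sfT$) is preserved under directed unions of such ideals. It is disjoint from $\mcM$ because each $\mcJ_\alpha$ is, and it clearly contains $\mcI$. Hence $\mcJ \in X(\mcM,\mcI)$ is an upper bound, and Zorn's lemma yields a maximal element $\mcP$.

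To show $\mcP$ is prime, I would argue by contradiction. Suppose $\mcI_1, \mcI_2$ are thick two-sided ideals with $\mcI_1 \otimes \mcI_2 \subseteq \mcP$ but $\mcI_j \not\subseteq \mcP$ for both $j=1,2$. Then the thick two-sided ideals $\langle \mcP, \mcI_j \rangle$ generated by $\mcP \cup \mcI_j$ each strictly contain $\mcP$, so by maximality each must meet $\mcM$; pick $m_j \in \langle \mcP, \mcI_j \rangle \cap \mcM$. By multiplicativity of $\mcM$ there exists $c \in \sfT$ with $m_1 \otimes c \otimes m_2 \in \mcM$. The task is then to show that this element also lies in $\mcP$, contradicting $\mcP \cap \mcM = \varnothing$. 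This is done by induction on the construction of $\langle \mcP, \mcI_j\rangle$ under the thick two-sided ideal operations: every relevant subproduct of $m_1 \otimes c \otimes m_2$ is either absorbed by $\mcP$ (because one factor lies in $\mcP$ and $\mcP$ is a two-sided ideal) or lies in $\mcI_1 \otimes \mcI_2 \subseteq \mcP$ (when the factors come from $\mcI_1$ and $\mcI_2$ respectively).

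The main obstacle is this last combinatorial step. Unlike the symmetric case, one cannot freely rearrange factors in $m_1 \otimes c \otimes m_2$, and one must carefully describe the elements of the generated ideal $\langle \mcP, \mcI_j \rangle$ so that left-tensoring by $m_1 \otimes c$ and right-tensoring by $m_2$ can be tracked through the filtration by triangles, direct summands, and external tensoring that defines the ideal. I expect the cleanest route is to describe $\langle \mcP, \mcI_j\rangle$ concretely as the thick closure of $\{a \otimes x \otimes b \mid a,b \in \sfT,\ x \in \mcP \cup \mcI_j\}$, and then exploit the bilinearity of $\otimes$ and the two-sided ideal property of $\mcP$ to push the two enclosing factors $m_1 \otimes c$ and $m_2$ through the filtration term by term.
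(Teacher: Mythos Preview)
The paper does not prove this theorem at all: it is quoted verbatim from \cite[3.2.3]{noncomttbasics} and used as a black box, so there is no ``paper's own proof'' to compare against. Your sketch is essentially the argument given in \cite{noncomttbasics} (and is the expected noncommutative adaptation of \cite[2.2]{BaSpec}): Zorn's lemma for existence of a maximal element, followed by a contradiction argument for primeness in which one picks $m_j \in \langle \mcP, \mcI_j\rangle \cap \mcM$ and shows their product lands back in $\mcP$.

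Two small remarks on the sketch. First, since $\mcP$ and $\mcI_j$ are already thick two-sided ideals, $\langle \mcP,\mcI_j\rangle$ is simply the thick closure of $\mcP \cup \mcI_j$; no further external tensoring is needed in its description, which simplifies the filtration you mention. Second, the ``combinatorial step'' you flag as the main obstacle is exactly the content of the inductive argument in \cite{noncomttbasics}: one shows that the subcategory $\{x \in \sfT \mid x \otimes c \otimes m_2 \in \mcP \text{ for all } c\}$ is a thick two-sided ideal containing $\mcP \cup \mcI_1$ (using that $\{y \mid i \otimes c \otimes y \in \mcP \text{ for all } c\}$ contains $\mcP \cup \mcI_2$ for each $i \in \mcI_1$), hence contains $m_1$. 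This is cleaner than tracking ``subproducts'' directly and avoids any rearrangement issues. With that in place your outline is correct.
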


We now bring in our noetherian assumption.

\begin{assump*}
From now on, we shall assume that the spectrum $\Spc(\sfT)$ is a noetherian topological space.
\end{assump*}

\begin{rem}\label{noetherianconsequence}
By assuming that $\Spc(\sfT)$ is noetherian, it immediately follows that $\Spc(\sfT)$ is quasi-compact, as are all of the basic open subsets $U(\mcS)$, including those of the form $U(t)$ for all objects $t \in \sfT$. 
\end{rem}

\begin{rem}
    As noted in the introduction, there are other possible assumptions on the market. The alternative assumption that all prime ideals are completely prime is used in \cite{completelyprimeclass} to classify ideals. The completely prime assumption may hold when the noetherian assumption fails, such as in the symmetric case of the stable homotopy category of spectra. However, even in those cases where one assumption implies the other, it may be easier to verify that the spectrum is noetherian than it is to check that all primes are completely prime. The techniques of both approaches vary significantly, with the proof in \cite{completelyprimeclass} using a point-free approach. 
\end{rem}

\begin{prop}\label{mtgenericpoints}
Non-empty irreducible subsets of $\Spc(\sfT)$ have unique generic points. Indeed for a non-empty closed subset $Z \subseteq \Spc(\sfT)$ the following are equivalent:
\begin{enumerate}
    \item $Z$ is irreducible.
    \item For all $t,s \in \sfT$, if $U(t\oplus s)\cap Z = \varnothing$, then $U(t)\cap Z= \varnothing$ or $U(s) \cap Z =\varnothing$.
    \item The collection $\mcP=\{t \in \sfT \ \vert \ U(t)\cap Z \neq \varnothing\}$ is a thick prime $\otimes$-ideal.
\end{enumerate}
Moreover, when these conditions hold $Z=\overline{\{\mcP\}}$.
\end{prop}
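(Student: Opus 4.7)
The plan is to prove (1) $\Leftrightarrow$ (2), (3) $\Rightarrow$ (2), and (2) $\Rightarrow$ (3), together with the closure identity $Z = \overline{\{\mcP\}}$; uniqueness of generic points is immediate from $T_0$ (Proposition \ref{mttzero}). Throughout I would use that the basic closed sets $\supp(t)$ form a basis closed under finite unions, since $\supp(t\oplus s)=\supp(t)\cup\supp(s)$, and dually $U(t)\cap U(s) = U(t\oplus s)$. For (1) $\Rightarrow$ (2), rewriting $U(t\oplus s)\cap Z = \varnothing$ as $Z\subseteq\supp(t)\cup\supp(s)$ lets irreducibility of $Z$ applied to this closed decomposition do the work. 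For (2) $\Rightarrow$ (1), a decomposition $Z = Z_1\cup Z_2$ into proper closed subsets can be refined through the basis: writing $Z_i = \bigcap_{t\in\mcS_i}\supp(t)$ and choosing $t_i^\ast\in\mcS_i$ with $Z\not\subseteq\supp(t_i^\ast)$ yields $Z\subseteq\supp(t_1^\ast\oplus t_2^\ast)$ without $Z$ lying in either summand, contradicting (2). The implication (3) $\Rightarrow$ (2) is simply that a thick ideal is closed under direct sums.

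The core content is (2) $\Rightarrow$ (3). Setting $\mcP = \{t : U(t)\cap Z\neq\varnothing\} = \bigcup_{\mcQ\in Z}\mcQ$, I would verify that $\mcP$ is a proper thick two-sided ideal using Lemma \ref{smallnoncomsupp}: properness from $\unit\notin\mcQ$ for any prime $\mcQ$; closure under suspension and cones from the support axioms for triangles; closure under direct sums from condition (2); the two-sided property from $\supp(a\otimes t)\subseteq\supp(a)\cap\supp(t)\subseteq\supp(t)$ (via Lemma \ref{smallnoncomsupp}(5) with $c=\unit$). For primeness I would apply Theorem \ref{monoidalavoidance} to the complement $\mcM = \sfT\setminus\mcP = \{t : Z\subseteq\supp(t)\}$ and the zero ideal $\mcI=0$: if $\mcM$ is multiplicative, the theorem produces a prime $\mcP^\ast$ maximal in $X(\mcM,0)$, so maximality gives $\mcP\subseteq\mcP^\ast$ (since $\mcP$ itself is disjoint from $\mcM$) and disjointness gives $\mcP^\ast\subseteq\sfT\setminus\mcM = \mcP$, yielding $\mcP = \mcP^\ast$ prime.

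Multiplicativity of $\mcM$ is the technical heart of the argument and the step I expect to be the main obstacle: for $a,b\in\mcM$ one must produce some $c$ with $Z\subseteq\supp(a\otimes c\otimes b)$. Lemma \ref{lemsuppintersection} provides $Z\subseteq\supp(a)\cap\supp(b) = \bigcup_c\supp(a\otimes c\otimes b)$, and the family $\{\supp(a\otimes c\otimes b)\}_c$ is directed under $\subseteq$ via $c\oplus c'$, so the task reduces to showing this filtered union is attained at a single index. The bare analogue of this reduction fails in a generic noetherian $T_0$ space, so the argument must leverage the specific structure of $\Spc(\sfT)$: in particular the quasi-compactness of basic opens $U(t)$ from Remark \ref{noetherianconsequence}, and their closure under finite intersections via $U(t\oplus s) = U(t)\cap U(s)$, which together with noetherian DCC should rule out an infinite strictly ascending chain in the family. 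Once multiplicativity and hence primeness of $\mcP$ are in hand, the closure identity follows quickly: $\mcQ\subseteq\mcP$ for all $\mcQ\in Z$ gives $Z\subseteq\overline{\{\mcP\}}$ by Proposition \ref{mttzero}, while writing $Z = V(\mcS)$ one has $\mcS\subseteq\mcM$, so $\mcP\cap\mcS = \varnothing$ and $\mcP\in V(\mcS) = Z$; closedness of $Z$ then gives $\overline{\{\mcP\}}\subseteq Z$.
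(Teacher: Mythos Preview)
Your treatment of $(1)\Leftrightarrow(2)$, of $(3)\Rightarrow(2)$, of the thick two-sided ideal axioms for $\mcP$, and of the closure identity $Z=\overline{\{\mcP\}}$ is fine and matches the paper. The genuine gap is in your primeness argument for $(2)\Rightarrow(3)$.

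You propose to obtain primeness by invoking Theorem~\ref{monoidalavoidance} with $\mcM=\sfT\setminus\mcP=\{t:Z\subseteq\supp(t)\}$, which requires $\mcM$ to be multiplicative: for $a,b\in\mcM$ you need some $c$ with $Z\subseteq\supp(a\otimes c\otimes b)$. You correctly reduce this to asking that the directed union $\bigcup_{c}\supp(a\otimes c\otimes b)=\supp(a)\cap\supp(b)\supseteq Z$ be attained at a single index, and you correctly note that this fails in an arbitrary noetherian $T_0$ space. But the fix you gesture at does not work. Noetherianity is DCC on \emph{closed} subsets, equivalently ACC on \emph{open} subsets; the family $\{\supp(a\otimes c\otimes b)\}_c$ is an \emph{ascending} directed family of \emph{closed} sets, so neither chain condition applies. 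Quasi-compactness of the basic opens $U(t)$ is just the dual restatement and does not help either: it lets you refine an \emph{open} cover of a closed set to a finite one, whereas here you are faced with a \emph{closed} cover of $Z$. Without a separate argument producing a single $c$, the appeal to Theorem~\ref{monoidalavoidance} is not justified, and the implication $(2)\Rightarrow(3)$ remains unproved in your outline.

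The paper takes a different route and never touches Theorem~\ref{monoidalavoidance} here. It verifies the ideal-level prime condition directly: given thick two-sided ideals $\mcI,\mcJ$ with $\mcI\otimes\mcJ\subseteq\mcP$, it uses the identity
\[
\bigcup_{i\in\mcI,\ j\in\mcJ}\supp(i\otimes j)=\supp(\mcI)\cap\supp(\mcJ)
\]
from \cite[4.4.2]{noncomttbasics}, passes to complements to obtain $\bigcap_{i,j}U(i\otimes j)=\bigcap_i U(i)\cup\bigcap_j U(j)$, and argues from membership in these sets that $\mcI\subseteq\mcP$ or $\mcJ\subseteq\mcP$. The point is that working at the level of the whole ideals $\mcI,\mcJ$ replaces the need to single out one object $a\otimes c\otimes b$ realising the intersection; this is exactly what your multiplicative-set approach cannot avoid.
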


\begin{proof}
The proof is very similar to \cite[2.18]{BaSpec}, although some extra care is needed when proving things are prime. We have already seen that $\Spc(\sfT)$ is $T_0$ and so uniqueness of generic points is immediate.\\
$(1)\implies (2)$:\ $Z$ irreducible means that for any open subsets $U_1, U_2 \in \Spc(\sfT)$, if $Z \cap U_1 \cap U_2 = \varnothing$ then $Z\cap U_1 =\varnothing$ or $Z\cap U_2 =\varnothing$. This gives $(2)$, since $U(t\oplus s)=U(t)\cap U(s)$.\\
$(2)\implies (3)$:\ This will be slightly more involved then the proof in \cite{BaSpec}. The assumption $(2)$ gives $t,s \in \mcP$ implies $t \oplus s \in \mcP$. Using this, we see
that if $t,s \in \mcP$ and $t\xrightarrow{} s \xrightarrow{} r \xrightarrow{} \Sigma t$ is a distinguished triangle, then $r \in \thick^\otimes(t\oplus s)$, hence $U(t\oplus s)\subseteq U(r)$ and since $U(t\oplus s)\cap Z \neq \varnothing$, we get $U(r)\cap Z \neq \varnothing$, and so $r \in \mcP$. The fact that $\mcP$ is closed under summands is immediate as $U(t\oplus s) \cap Z \neq \varnothing$ implies $(U(t)\cap U(s))\cap Z \neq \varnothing$ and therefore $U(t)\cap Z \neq \varnothing$ and $U(s) \cap Z \neq \varnothing$. Therefore both $t$ and $s$ are objects in $\mcP$. It remains to show that $\mcP$ is a two-sided ideal, and that it is prime. Fix $t \in \mcP$ and $x \in \sfT$. We have
\begin{align*}
    \varnothing &\neq Z \cap U(t)\\
    &\subseteq Z \cap (U(t) \cup U(x))\\
    &= Z \cap (\bigcap_{s \in \sfT}U(t\otimes s \otimes x)) \text{ by Lemma \ref{smallnoncomsupp}}\\
    &\subseteq Z \cap U(t \otimes x).
\end{align*}
Therefore $t\otimes x \in \mcP$. An almost identical argument shows that $x \otimes t\in \mcP$ and so $\mcP$ is indeed a two-sided ideal. Now we deal with primeness. Let $\mcI,\mcJ$ be thick $\otimes$-ideals such that $\mcI \otimes \mcJ \subseteq \mcP$. In particular, for all $i \in \mcI$ and $j \in \mcJ$ we have $U(i\otimes j) \cap Z \neq \varnothing$. Now,
\[
\bigcup_{i\in\mcI,j\in\mcJ}\supp(i\otimes j)=\bigcup_{i \in \mcI}\supp(i) \cap \bigcup_{j \in \mcJ}\supp(j),
\]
see for example \cite[4.4.2]{noncomttbasics}.
Therefore 
\[
\bigcap_{i\in \mcI,j\in\mcJ}U(i\otimes j) = \bigcap_{i \in \mcI}U(i) \cup \bigcap_{j \in \mcJ}U(j).
\]
As we assumed $\mcI \otimes \mcJ \subseteq \mcP$ we must have $\mcP \in U(i\otimes j)$ for all $i$ and $j$. Therefore $\mcP \in U(i)$ for all $i$ or $\mcP \in U(j)$ for all $j$, which is equivalent to asking that $\mcI \subseteq \mcP$ or $\mcJ \subseteq \mcP$, and so we conclude that $\mcP$ is indeed prime.\\
$(3)\implies(1)$:\ We prove that $Z = \overline{\{P\}}$ which proves $(1)$ and and the final statment of the proposition.
Let $\mcQ \in Z$. For $a \in \mcQ$, we have $\mcQ \in U(a) \cap Z \neq \varnothing$, hence $a \in \mcP$. We have proved
$\mcQ \subseteq \mcP$, that is, $\mcQ \in \overline{\{P\}}$ by Proposition \ref{mttzero} for any $\mcQ \in Z$. So, we have $Z \subseteq \overline{\{P\}}$. Conversely,
it suffices to prove $P \in Z$. To see this, let $s \in \sfT$ be an object such that
$Z \subseteq \supp(s)$. Such objects exist by Lemma \ref{basicclosures}. Then $U(s) \cap Z = \varnothing$ which means $s \not\in \mcP$, or equivalently $\mcP \in \supp(s)$.
Therefore,
\[\mcP \in \bigcap_{Z \subseteq \supp(s)}\supp(s)=\overline{Z}=Z,\] by Lemma \ref{basicclosures}.
\end{proof}

Putting together the results so far, we obtain the following theorem:

\begin{thm}
If $\Spc(\sfT)$ is noetherian, then it is spectral.
\end{thm}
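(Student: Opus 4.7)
The plan is to verify the four conditions in the definition of spectral space directly, using the results already established in the excerpt together with the noetherian hypothesis. All of the hard work has essentially been done; the proof amounts to bookkeeping.

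First I would dispatch the two topological conditions that are not about compactness. The $T_0$ property is exactly the content of Proposition \ref{mttzero}. Sobriety is the statement that every non-empty irreducible closed subset has a unique generic point, which is precisely the first conclusion of Proposition \ref{mtgenericpoints} (together with the uniqueness that follows from $T_0$). So both of these are immediate citations.

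Next I would address the compactness and basis conditions, which is where the noetherian assumption does the real work. Since $\Spc(\sfT)$ is noetherian, every open subset is quasi-compact by the definition recalled in the excerpt, so in particular $\Spc(\sfT)$ itself is quasi-compact, and $K(\Spc(\sfT))$ coincides with the entire topology. Consequently $K(\Spc(\sfT))$ is trivially a basis (for instance, the $U(s)$ already form a basis by the discussion after Definition \ref{noncomspecdef}, and every $U(s) \in K(\Spc(\sfT))$), and it is trivially closed under finite intersections since the topology itself is closed under finite intersections. Remark \ref{noetherianconsequence} already records these compactness consequences explicitly.

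The only thing to be careful about is not to invoke any property we have not actually established; in particular, I would not need to characterize $K(\Spc(\sfT))$ any more precisely than ``equals the whole topology.'' Since every required condition reduces either to Proposition \ref{mttzero}, Proposition \ref{mtgenericpoints}, or the observation that every open subset of a noetherian space is quasi-compact, there is no substantive obstacle and the proof is essentially a one-paragraph assembly of previously proved facts.
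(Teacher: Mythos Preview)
Your proposal is correct and follows essentially the same approach as the paper: both verify the four defining conditions of a spectral space by citing Proposition~\ref{mttzero} for $T_0$, Proposition~\ref{mtgenericpoints} for sobriety, and the noetherian hypothesis for the compactness conditions. Your treatment of condition (3) is in fact slightly cleaner than the paper's: you observe that $K(\Spc(\sfT))$ equals the entire topology and is therefore trivially closed under finite intersections, whereas the paper gives an explicit computation with finite refinements that is unnecessary once one knows every open is quasi-compact.
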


\begin{proof}
We verify the conditions required to be spectral.
\begin{enumerate}
    \item $\Spc(\sfT)$ is quasi-compact by Remark \ref{noetherianconsequence}. The space is $T_0$ by Proposition \ref{mttzero}.
    \item Under the noetherian assumption, every open subset is quasi-compact and so it is immediate that the collection $K(\Spc(\sfT))$ of all quasi-compact open subsets is a basis for $\Spc(\sfT)$. 
    %\item For a basic open $U(\mcS)$ we have $U(\mcS)=\bigcup_{s \in \mcS}U(s)$. By Remark \ref{noetherianconsequence} every set of the form $U(t)$ is quasi-compact for any $t \in \sfT$. Therefore the collection $\{U(t)\ \vert \ t \in \sfT\}\subseteq K(X)$ is a basis of quasi-compact opens for $\Spc(\sfT)$, and so $K(X)$ is also a basis. 
    \item Given quasi-compact basic opens of the form $U(t)$ and $U(s)$ we have $U(s) \cap U(t) = U(s\oplus t)$. For a quasi-compact basic open of the form $U(\mcS)=\bigcup_{s \in \mcS}U(s)$ by quasi-compactness there exists a finite subset $\mcS^\prime \subseteq \mcS$ such that $U(\mcS)=U(\mcS^\prime)=\bigcup_{s \in \mcS^\prime}U(s)$. Given another such quasi-compact basic open $U(\mcT)$, with finite refinement $\mcT^\prime$, we obtain
    \begin{align*}
        U(\mcS)\cap U(\mcT) &= \bigcup_{s \in \mcS^\prime}U(s) \cap \bigcup_{t \in \mcT^\prime}U(t)\\
        &=\bigcup_{s \in \mcS^\prime}\bigcup_{t \in \mcT^\prime}(U(s) \cap U(t))\\
        &=\bigcup_{s \in \mcS^\prime}\bigcup_{t \in \mcT^\prime}U(s\oplus t).
    \end{align*}
    As $\mcS^\prime$ is a finite set, and $\mcT^\prime$ is a finite refinement, both unions are finite. As $U(\mcS)$ and $U(\mcT)$ are quasi-compact, it follows that the intersection is quasi-compact. 
    \item By Proposition \ref{mtgenericpoints}, every non-empty irreducible subsets of $\Spc(\sfT)$ has a unique generic point. In other words, $\Spc(\sfT)$ is sober.
\end{enumerate}
As all of the conditions are satisfied, we conclude that the spectrum $\Spc(\sfT)$ is a spectral space.
\end{proof}

\begin{rem}
    There are other conditions on the mt-category $\sfT$ which can lead to $\Spc(\sfT)$ being spectral. For example, \cite[6.7]{BKS} proves that the spectrum is spectral under the assumption that the tensor product is symmetric, or that the mt-category $\sfT$ has a generator. 
\end{rem}

\section{Classifying thick two-sided ideals}
The objective of this section is to classify all \emph{semiprime} thick tensor ideals of a mt-category $\sfT$ in terms of Thomason subsets of the spectrum $\Spc(\sfT)$. We do this under the hypothesis that $\Spc(\sfT)$ is noetherian, which we assume throughout. If the mt-category $\sfT$ is rigid, then the classification actually covers \emph{all} thick two-sided tensor ideals, recovering Balmer's original classification \cite[4.10]{BaSpec} in the symmetric case. 

\begin{lem}\label{collectionsupp}
Given a collection of objects $\mcE \in \sfT$ there is an equality
\[
\supp(\mcE)=\{\mcP \in \Spc(\sfT)\ \vert \ \mcE \not\subseteq \mcP\}.
\]
\end{lem}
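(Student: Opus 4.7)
The plan is to prove the set equality by two straightforward inclusions, unwinding the definitions on each side. Recall that $\supp(\mcE)$ was defined as $\bigcup_{s \in \mcE} \supp(s)$, and that by definition $\supp(s) = \{\mcP \in \Spc(\sfT) \mid s \notin \mcP\}$. So the claim reduces to the purely set-theoretic identity
\[
\bigcup_{s \in \mcE}\{\mcP \in \Spc(\sfT) \mid s \notin \mcP\} \;=\; \{\mcP \in \Spc(\sfT) \mid \mcE \not\subseteq \mcP\}.
\]

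For the forward inclusion, I would take $\mcP \in \supp(\mcE)$, so that $\mcP \in \supp(s)$ for some $s \in \mcE$; this witness $s$ lies in $\mcE$ but not in $\mcP$, so $\mcE \not\subseteq \mcP$. For the reverse inclusion, if $\mcE \not\subseteq \mcP$, then there exists some $s \in \mcE$ with $s \notin \mcP$, which places $\mcP$ in $\supp(s)$ and hence in the union $\supp(\mcE)$.

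There is no real obstacle here — the lemma is essentially a bookkeeping statement that translates the ``objectwise'' description of support into an ``idealwise'' description. The only thing worth being careful about is that the convention $\supp(\mcE) = \bigcup_{s\in\mcE}\supp(s)$ is the one fixed earlier in the paper (just before Lemma \ref{basicclosures}), so the argument is really just a definition-chase, and no use of primeness, the noetherian hypothesis, or any triangulated structure is needed.
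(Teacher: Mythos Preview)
Your proof is correct and follows essentially the same approach as the paper: both simply unwind the definition $\supp(\mcE)=\bigcup_{s\in\mcE}\supp(s)$ together with $\supp(s)=\{\mcP\mid s\notin\mcP\}$ to obtain the equivalence $\mcP\in\supp(\mcE)\iff\exists\,s\in\mcE$ with $s\notin\mcP\iff\mcE\not\subseteq\mcP$. The paper states this chain of equivalences in one sentence, while you spell out the two inclusions separately, but the content is identical.
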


\begin{proof}
The proof is identical to \cite[4.6]{BaSpec}. We have $\mcP \in \supp(\mcE)$ if and only if there exists an object $a \in \mcE$ such that $\mcP \in
\supp(a)$ which means $a \not\in \mcP$, by definition of the support.
\end{proof}

\begin{defn}
Let $\mcJ$ be a thick tensor ideal of $\sfT$. We denote by $\sqrt{\mcJ}$ the semiprime ideal
\[
\sqrt{\mcJ}=\bigcap_{\mcJ \subseteq \mcP \in \Spc(\sfT)}\mcP.
\]
\end{defn}

\begin{defn}
Let $Y \subseteq \Spc(\sfT)$ be a subset. Define the full subcategory $\sfT_Y$ by
\[
\sfT_Y=\{t \in \sfT \ \vert \ \supp(t)\subseteq Y\}.
\]
\end{defn}

\begin{lem}\label{subcatsupportsdirect}
The subcategory $\sfT_Y$ is a thick two-sided tensor ideal.
\end{lem}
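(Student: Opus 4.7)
The plan is to verify the three defining properties of a thick two-sided tensor ideal directly from the properties of the support listed in Lemma \ref{smallnoncomsupp}. Throughout, everything reduces to checking containments of closed subsets inside $Y$.

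First I would show $\sfT_Y$ is a triangulated subcategory. The zero object lies in $\sfT_Y$ because $\supp(0) = \varnothing \subseteq Y$ by property (1), and closure under $\Sigma$ is immediate from property (3). For the two-of-three property in distinguished triangles, I would apply property (4) together with its rotations: given a triangle $t \to s \to r \to \Sigma t$, each of $\supp(t)$, $\supp(s)$, $\supp(r)$ is contained in the union of the other two, so if any two of the objects belong to $\sfT_Y$, the third does as well. Closure under direct summands follows from property (2), since if $t \oplus s \in \sfT_Y$ then $\supp(t) \subseteq \supp(t) \cup \supp(s) = \supp(t \oplus s) \subseteq Y$.

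The two-sided ideal property is the only mildly non-obvious step. Given $t \in \sfT_Y$ and an arbitrary $x \in \sfT$, I want to show $\supp(x \otimes t) \subseteq Y$ and $\supp(t \otimes x) \subseteq Y$. The key trick is to apply property (5) with the middle object $r = \unit$: then $x \otimes t = x \otimes \unit \otimes t$ appears as one term in the union, so
\[
\supp(x \otimes t) \subseteq \bigcup_{r \in \sfT} \supp(x \otimes r \otimes t) = \supp(x) \cap \supp(t) \subseteq \supp(t) \subseteq Y.
\]
The argument for $\supp(t \otimes x)$ is symmetric, applying property (5) to the pair $(t,x)$ with middle object $\unit$.

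The main obstacle, such as it is, lies precisely in recognising that the seemingly stronger statement $\supp(x \otimes t) \subseteq \supp(t)$ is not a primitive property of the support but must be extracted from the three-fold intersection formula by specialising the middle term to the unit. Everything else is a direct transcription of the Balmer-style argument from the symmetric case, with no further complications introduced by the non-symmetry of $\otimes$.
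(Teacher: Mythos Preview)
Your proof is correct and follows essentially the same approach as the paper: the thick subcategory part is handled by the standard support properties, and the ideal property is obtained exactly as you do, by inserting $\unit$ as the middle object in property~(5) to get $\supp(x\otimes t)\subseteq\supp(x)\cap\supp(t)\subseteq Y$. The paper is terser about the thick subcategory verification, but the content is identical.
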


\begin{proof}
The statement is similar to \cite[6.1.1]{noncomttbasics}. The fact that $I_Y$ is a thick subcategory follows immediately from the usual properties of support. Now let $s \in \sfT_Y$ and $t \in \sfT$. Then
\begin{align*}
    \supp(s\otimes t)&=\supp(s\otimes\unit\otimes t)\\
    &\subseteq \bigcup_{c \in \sfT}\supp(s\otimes c\otimes t)\\
    &=\supp(s) \cap \supp(t)\\
    &\subseteq Y.
\end{align*}
Therefore $\supp(s\otimes t)\subseteq Y$ and $s\otimes t \in \sfT_Y$. That is, $\sfT_Y$ is a right ideal. A similar argument shows that $\sfT_Y$ is a left ideal.
\end{proof}

\begin{rem}
Note that if $(X,\sigma)$ is a support datum on $\sfT$ then above lemma can be adjusted to show that the full subcategory $\{t \in \sfT \ \vert \ \sigma(t) \subseteq Y\}$ is a thick two-sided ideal.
\end{rem}

\begin{lem}\label{subcatsupportssemiprime}
There is an equality 
\[
\sfT_Y=\bigcap_{\mcP \not\in Y}\mcP\text{ where }\mcP \in \Spc(\sfT).
\]
\end{lem}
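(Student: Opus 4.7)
The plan is to prove the set equality by directly unwinding the definition of support, namely $\supp(t) = \{\mcP \in \Spc(\sfT) \mid t \not\in \mcP\}$. The condition $\supp(t) \subseteq Y$ is simply the contrapositive of the statement ``for every prime $\mcP \not\in Y$, we have $t \in \mcP$,'' so both sides of the desired equality literally describe the same objects of $\sfT$. No additional input from the noetherian hypothesis or from Theorem \ref{monoidalavoidance} is needed.

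For the inclusion $\sfT_Y \subseteq \bigcap_{\mcP \not\in Y}\mcP$, I would fix $t \in \sfT_Y$, so that $\supp(t) \subseteq Y$. For any prime $\mcP$ with $\mcP \not\in Y$, it follows that $\mcP \not\in \supp(t)$, and by the definition of the support this means $t \in \mcP$. As $\mcP$ ranges over all primes outside $Y$, we conclude $t \in \bigcap_{\mcP \not\in Y}\mcP$.

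For the reverse inclusion, suppose $t \in \bigcap_{\mcP \not\in Y}\mcP$. To check $t \in \sfT_Y$, pick any $\mcP \in \supp(t)$; by definition of support, $t \not\in \mcP$, so our hypothesis on $t$ forces $\mcP \in Y$. Hence $\supp(t) \subseteq Y$ and $t \in \sfT_Y$, completing the proof.

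There is no real obstacle here; the lemma is essentially a definitional unwinding. Its importance is conceptual rather than technical: once established, it exhibits $\sfT_Y$ explicitly as an intersection of prime ideals, which immediately identifies it as a semiprime thick two-sided ideal and sets up the subsequent bijection between Thomason subsets and semiprime thick $\otimes$-ideals.
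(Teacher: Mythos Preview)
Your proof is correct and is exactly the same definitional unwinding the paper gives (following \cite[4.8]{BaSpec}): both arguments simply translate $\supp(t)\subseteq Y$ into the equivalent statement that $t\in\mcP$ for every prime $\mcP\notin Y$. Your version is just written out more explicitly in two directions, but there is no substantive difference.
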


\begin{proof}
The proof is identical to \cite[4.8]{BaSpec}. For an object $t \in \sfT$, we have $t \in \sfT_Y$ if and only if $\supp(t)\subseteq Y$. Therefore, for all $\mcP \in \Spc(\sfT)\setminus Y$, $t \in \sfT_Y$ if and only if $\mcP \not\in \supp(t)$ and $t \not\in\mcP$. Hence $t \in \bigcap_{\mcP \not\in Y}\mcP$ and the conclusion holds.
\end{proof}

\begin{prop}\label{tysupporradical}
Let $\mcJ$ be a thick tensor ideal of $\sfT$. Then
\[
\sfT_{\supp(\mcJ)}=\sqrt{\mcJ}.
\]
\end{prop}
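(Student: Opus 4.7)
The plan is to observe that the claim is essentially a bookkeeping identity combining the two preceding lemmas. The idea is to rewrite both sides of the claimed equality as intersections indexed over suitable collections of prime ideals, and then recognise that the two indexing collections are literally the same.

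First, I would apply Lemma \ref{subcatsupportssemiprime} with the Thomason-type subset $Y = \supp(\mcJ)$ in order to rewrite
\[
\sfT_{\supp(\mcJ)} \;=\; \bigcap_{\mcP \notin \supp(\mcJ)} \mcP,
\]
where the intersection runs over $\mcP \in \Spc(\sfT)$ with $\mcP \notin \supp(\mcJ)$. This reduces the problem to identifying which primes lie outside $\supp(\mcJ)$.

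Next, I would invoke Lemma \ref{collectionsupp}, which states that $\supp(\mcJ) = \{\mcP \in \Spc(\sfT) \mid \mcJ \not\subseteq \mcP\}$. Taking the complement of this characterisation, a prime $\mcP$ satisfies $\mcP \notin \supp(\mcJ)$ if and only if $\mcJ \subseteq \mcP$. Consequently the two indexing collections coincide, and combining this with the previous display gives
\[
\sfT_{\supp(\mcJ)} \;=\; \bigcap_{\mcJ \subseteq \mcP \in \Spc(\sfT)} \mcP \;=\; \sqrt{\mcJ},
\]
which is exactly the claimed equality.

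There is no real obstacle here, as the substantive content has already been packaged into Lemmas \ref{collectionsupp} and \ref{subcatsupportssemiprime}; the proof is simply the observation that these two lemmas dovetail perfectly when $Y$ is chosen to be $\supp(\mcJ)$. The only minor subtlety worth flagging is that $\sqrt{\mcJ}$ is well-defined as a genuine intersection, which relies implicitly on $\mcJ$ being contained in at least one prime, and this follows from Theorem \ref{monoidalavoidance} applied with $\mcM = \{\unit\}$ whenever $\mcJ$ is proper (while the case $\mcJ = \sfT$ is handled by the standard convention that an empty intersection of subcategories of $\sfT$ equals $\sfT$, matching $\sfT_{\Spc(\sfT)} = \sfT$).
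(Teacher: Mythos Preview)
Your proof is correct and follows essentially the same route as the paper: apply Lemma~\ref{subcatsupportssemiprime} to rewrite $\sfT_{\supp(\mcJ)}$ as an intersection of primes not in $\supp(\mcJ)$, then use Lemma~\ref{collectionsupp} to identify that indexing set with the primes containing $\mcJ$, recovering the definition of $\sqrt{\mcJ}$. Your additional remark about the empty-intersection convention in the improper case is a pleasant bonus but not present (or needed) in the paper's version.
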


\begin{proof}
Identical to \cite[4.9]{BaSpec}. By Lemma \ref{subcatsupportssemiprime}, we have
\[\sfT_{\supp(\mcJ)}=\bigcap_{\mcP \not \in \supp(\mcJ)}\mcP\]
Applying Lemma \ref{collectionsupp}, gives $\supp(\mcJ)=\{\mcQ \in \Spc(\sfT) \ \vert \ \mcJ \not\subseteq \mcQ\}$ and so
\[\sfT_{\supp(\mcJ)}=\bigcap_{\mcP \not\in \{\mcQ \in \Spc(\sfT) \ \vert \ \mcJ \not\subseteq \mcQ\}}\mcP\]
The result then immediately follows from the definition of $\sqrt{\mcJ}$.
\end{proof}

\begin{notation}
We denote by $\mcT$ the collection of all Thomason subsets of $\Spc(\sfT)$. Recall that a subset $Y \subseteq \Spc(\sfT)$ is Thomason if $Y=\bigcup Y_i$ such that each $Y_i$ is closed and the open complement $\Spc(\sfT)\setminus Y_i$ is quasi-compact.
We denote by $\mcS$ the collection of all semiprime ideals of $\sfT$.
\end{notation}

\begin{thm}\label{mtclassification}
There is an order preserving bijection $\mcT \xrightarrow{\sim} \mcS$ given by
\[
Y \xrightarrow{} \sfT_Y
\]
whose inverse is
\[
\mcJ \xrightarrow{} \supp(\mcJ).
\]
\end{thm}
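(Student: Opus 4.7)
The plan is to verify that the two maps are order-preserving, that they land in the target collections, and that they compose to the identity in both directions. Order-preservation is immediate from the definitions: enlarging $Y$ only relaxes the condition $\supp(t) \subseteq Y$ defining $\sfT_Y$, and enlarging $\mcJ$ enlarges $\supp(\mcJ) = \bigcup_{t \in \mcJ}\supp(t)$. Well-definedness of the two maps is already essentially in hand. Lemma \ref{subcatsupportsdirect} says $\sfT_Y$ is a thick two-sided ideal, and Lemma \ref{subcatsupportssemiprime} exhibits it as $\bigcap_{\mcP \notin Y} \mcP$, hence as a semiprime ideal. Conversely, $\supp(\mcJ)$ is a union of closed subsets of $\Spc(\sfT)$, and the noetherian hypothesis forces every open subset to be quasi-compact, so each $\Spc(\sfT) \setminus \supp(t)$ is quasi-compact and $\supp(\mcJ)$ is Thomason.

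One round-trip is already available from earlier material. Starting from a semiprime $\mcJ$, Proposition \ref{tysupporradical} gives $\sfT_{\supp(\mcJ)} = \sqrt{\mcJ}$, and the semiprime hypothesis collapses the right-hand side to $\mcJ$.

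The main obstacle is the reverse round-trip $\supp(\sfT_Y) = Y$ for a Thomason $Y$, since this is where the loss of symmetry of $\otimes$ makes life harder than in the classical case. The inclusion $\supp(\sfT_Y) \subseteq Y$ is immediate from the definition of $\sfT_Y$. For the reverse inclusion, I would fix $\mcP \in Y$, write $Y = \bigcup_i Z_i$ with each $Z_i$ closed and $\Spc(\sfT)\setminus Z_i$ quasi-compact, and pick an index $i_0$ with $\mcP \in Z_{i_0}$. Quasi-compactness of $\Spc(\sfT)\setminus Z_{i_0}$ then refines the defining presentation $Z_{i_0} = V(\mcS)$ down to a finite intersection $\bigcap_{j=1}^n \supp(s_j)$. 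In the symmetric setting one would finish by observing $\supp(s_1 \otimes \cdots \otimes s_n) = \bigcap_j \supp(s_j)$, but this identity fails without symmetry. Instead I would invoke Lemma \ref{lemsuppintersection} to rewrite
\[
Z_{i_0} = \bigcup_{c_1, \ldots, c_{n-1} \in \sfT} \supp(s_1 \otimes c_1 \otimes s_2 \otimes \cdots \otimes c_{n-1} \otimes s_n),
\]
producing $c_1, \ldots, c_{n-1}$ with $\mcP \in \supp(t)$ for $t := s_1 \otimes c_1 \otimes s_2 \otimes \cdots \otimes c_{n-1} \otimes s_n$. Iterated application of property $(5)$ of Lemma \ref{smallnoncomsupp} then shows $\supp(t) \subseteq \bigcap_{j=1}^n \supp(s_j) = Z_{i_0} \subseteq Y$, so $t \in \sfT_Y$ and hence $\mcP \in \supp(\sfT_Y)$.

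Assembling these three ingredients produces the order-preserving bijection. The genuinely delicate point is the use of Lemma \ref{lemsuppintersection}: without symmetry a finite intersection of basic closed sets cannot be compressed into a single basic closed set, and only the union over all possible inserted ``separator'' objects $c_i$ recovers the intersection; meanwhile the noetherian hypothesis plays the complementary role of forcing the defining collection $\{s_1, \ldots, s_n\}$ to be finite in the first place.
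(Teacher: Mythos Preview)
Your proposal is correct and follows essentially the same route as the paper: well-definedness via Lemmas \ref{subcatsupportsdirect}, \ref{subcatsupportssemiprime} and the noetherian hypothesis, one round-trip via Proposition \ref{tysupporradical}, and the other via the Thomason decomposition, a finite refinement to $\bigcap_{j}\supp(s_j)$, and Lemma \ref{lemsuppintersection} to produce the witnessing object. Your added remark explaining why symmetry fails and how Lemma \ref{lemsuppintersection} compensates is a helpful gloss, but the argument itself matches the paper's proof step for step.
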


\begin{proof}
The first map is well-defined as $\sfT_Y$ is semiprime by Lemma \ref{subcatsupportssemiprime}. The second map is well defined as the complement of an object's support is quasi-compact under the noetherian assumption. Both maps are clearly inclusion preserving. It remains to show the maps are mutually inverse. Given a semiprime ideal $\mcJ$ the composite $\sfT_{\supp(\mcJ)}$ is equal to $\sqrt{\mcJ}$ by Proposition \ref{tysupporradical}. By assumption $\mcJ$ is semiprime and so $\mcJ=\sqrt{\mcJ}$, and so the composite 
\[\mcJ \xrightarrow{} \supp(\mcJ)\xrightarrow{} \sfT_{\supp(\mcJ)}\]
is the identity. 
Now let $Y$ be a Thomason subset of $\Spc(\sfT)$. It remains to show that the composition
\[
Y \xrightarrow{} \sfT_Y \xrightarrow{} \supp(\sfT_Y)
\]
is the identity. For an object $t \in \sfT$ we have by definition $t \in \sfT_Y$ if and only if $\supp(t)\subseteq Y$. Therefore
\[
\supp(\sfT_Y)=\bigcup_{t \in \sfT_Y}\supp(t)\subseteq Y.
\]
Now we need to show that $Y \subseteq \supp(\sfT_Y)$. That is, for each prime ideal $\mcP \in Y$ we must find a compact object $x$ such that $\mcP \in \supp(x)$ and $\supp(x) \subseteq Y$. As $Y$ is a Thomason subset of $\Spc(\sfT)$, there exist closed subsets $Y_i$ such that $Y=\bigcup Y_i$ and the complement of each $Y_i$ is a quasi-compact open subset $U_i$. Fix a prime $\mcP \in Y$. Then there exists an index $i$ such that
\[
\mcP \in Y_i = \Spc(\sfT) \setminus U_i.
\]
By assumption $\Spc(\sfT)$ is noetherian, so there exists a finite collection of objects $\{r_1, \dots, r_n\}$ such that $U_i=\bigcup_{j=1}^{n} U(r_j)$. Therefore
\begin{align*}
    Y_i &= \Spc(\sfT)\setminus U_i\\
    &=\Spc(\sfT)\setminus (\bigcup_{j=1}^n U(r_j))\\
    &=\bigcap_{j=1}^n (\Spc(\sfT)\setminus U(r_j))\\
    &=\bigcap_{j=1}^n \supp(r_j).\\
\end{align*}

By Lemma \ref{lemsuppintersection}, there exists compact objects $c_1,\dots, c_{n-1}$ such that for 
\[x=r_1 \otimes c_1 \otimes \cdots \otimes c_{n-1} \otimes r_n
\]
we have $\mcP \in \supp(x)$. Moreover, 
\[
\supp(x) \subseteq \bigcap_{j=1}^{n}\supp(r_j)=Y_i \subseteq Y
\]
and so $x \in \sfT_Y$, completing the proof. 
\end{proof}

\begin{prop}\emph{\cite[4.1.1]{noncomintersections}}
Suppose $\sfT$ is rigid, so that every object is either left or right dualisable. Then every thick two-sided tensor ideal is semiprime.
\end{prop}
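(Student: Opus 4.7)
The plan is to invoke part (2) of the object-level characterisation recalled in \cite[1.2.1]{noncomttbasics}: a proper thick two-sided ideal $\mcJ$ of $\sfT$ is semiprime if and only if, for every $A \in \sfT$, the condition $A \otimes C \otimes A \in \mcJ$ for all $C \in \sfT$ forces $A \in \mcJ$. With this criterion in hand, rigidity reduces the task to a one-line retract argument.

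First I would fix an arbitrary thick two-sided ideal $\mcJ$ and an object $A$ satisfying $A \otimes C \otimes A \in \mcJ$ for all $C \in \sfT$. The degenerate case $\mcJ = \sfT$ is harmless, since $\sfT$ is the empty intersection of primes, so I may assume $\mcJ$ is proper. By the rigidity hypothesis, $A$ admits either a right dual $A^\vee$ or a left dual ${}^\vee\! A$. If $A$ is right dualisable, the triangle identity
\[
\id_A = (A \otimes \mathrm{ev}) \circ (\mathrm{coev} \otimes A) \colon A \to A \otimes A^\vee \otimes A \to A
\]
exhibits $A$ as a direct summand of $A \otimes A^\vee \otimes A$. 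Specialising the hypothesis to $C = A^\vee$ places this middle object in $\mcJ$, and since thick subcategories are closed under retracts, $A \in \mcJ$. The left-dualisable case is entirely symmetric: the dual triangle identity factors $\id_A$ through $A \otimes {}^\vee\! A \otimes A$, and applying the hypothesis at $C = {}^\vee\! A$ yields the same conclusion.

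I do not foresee a serious obstacle. The argument is essentially immediate once the object-level criterion for semiprimeness is invoked. The only delicate point is bookkeeping around the two duality conventions and observing that the universal quantifier over $C$ in the hypothesis makes the choice $C = A^\vee$ (or $C = {}^\vee\! A$) always available, regardless of which type of dual $A$ happens to possess.
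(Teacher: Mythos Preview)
Your argument is correct. The paper does not actually supply its own proof of this proposition; it simply cites \cite[4.1.1]{noncomintersections}. Your retract argument via the triangle identities, combined with the object-level criterion for semiprimeness from \cite[1.2.1]{noncomttbasics}, is precisely the standard proof one finds in that reference, so there is nothing to compare.
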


\begin{cor}
    Let $\sfT$ be a rigid mt-category with noetherian spectrum $\Spc(\sfT)$. Then the order preserving bijection of Theorem \ref{mtclassification} classifies all thick two-sided tensor ideals of $\sfT$.
\end{cor}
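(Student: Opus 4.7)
The plan is to observe that the corollary follows almost immediately by combining Theorem \ref{mtclassification} with the preceding proposition from \cite{noncomintersections}. First I would recall that Theorem \ref{mtclassification} provides the order preserving bijection $\mcT \xrightarrow{\sim} \mcS$ between Thomason subsets of $\Spc(\sfT)$ and the collection $\mcS$ of all \emph{semiprime} thick two-sided tensor ideals of $\sfT$. The only gap between this statement and the corollary is the difference between ``semiprime thick two-sided ideals'' and ``all thick two-sided ideals''.

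Next, I would invoke the rigidity hypothesis together with the cited proposition: since $\sfT$ is rigid, every object is left or right dualisable, and hence every thick two-sided tensor ideal of $\sfT$ is automatically semiprime. In symbols, writing $\mcI$ for the collection of all thick two-sided tensor ideals, we have $\mcS = \mcI$ under the rigidity assumption.

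Substituting this equality of collections into the bijection supplied by Theorem \ref{mtclassification} then yields an order preserving bijection $\mcT \xrightarrow{\sim} \mcI$, with the same assignments $Y \mapsto \sfT_Y$ and $\mcJ \mapsto \supp(\mcJ)$. I do not anticipate any obstacle here: both the well-definedness of these assignments and their mutual inverse property were already established in the proof of Theorem \ref{mtclassification} without reference to rigidity, and the role of the rigidity hypothesis is purely to enlarge the source category $\mcS$ of the bijection to all of $\mcI$. The noetherian assumption on $\Spc(\sfT)$ is still needed, but it is already a standing hypothesis for Theorem \ref{mtclassification} and is carried through unchanged.
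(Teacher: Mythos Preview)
Your proposal is correct and matches the paper's own reasoning exactly: the corollary is an immediate consequence of Theorem~\ref{mtclassification} together with the preceding proposition that in a rigid mt-category every thick two-sided tensor ideal is semiprime.
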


\section{Classifying support data and the universal map}

Working under the noetherian assumption, we can now investigate the universality of the spectrum with respect to classifying support data. This section provides the monoidal analogue of \cite[5.2]{BaSpec}. 

\begin{defn}
A subset $Y\subset X$ of a topological space $X$ is \emph{specialisation closed} if it is the union of closed sets, or equivalently if $y \in Y$ implies $\overline{\{y\}}\subseteq Y$. Given a topological space $X$ we denote by $\mcX_{\spe}$ the collection of all specialisation closed subsets of $X$.
\end{defn}

Recall that we denote the collection of all thick semiprime ideals of $\sfT$ by $\mcS$.

\begin{defn}
Let $(X,\sigma)$ be a support datum on $\sfT$. We say that $(X,\sigma)$ is a \emph{classifying support datum} if the following two conditions hold:
\begin{enumerate}
    \item The space $X$ is noetherian and spectral.
    \item We have a bijection $\Theta:\mcX_\spe \xrightarrow{} \mcS$ defined by $\Theta(Y)=\{t \in \sfT \ \vert \ \sigma(t)\subseteq Y\}$ with inverse $\Theta^{-1}(\mcJ)=\sigma(\mcJ)=\bigcup_{j \in \mcJ}\sigma(j)$.
\end{enumerate}
\end{defn}

\begin{lem}\label{closedaresupports}
Suppose $(X,\sigma)$ is a classifying support datum on $\sfT$. Then every closed subset $Z\subseteq X$ is of the form $Z=\sigma(t)$ for some object $t \in \sfT$.
\end{lem}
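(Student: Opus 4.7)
The plan is to reduce the problem to showing that each \emph{irreducible} closed subset is of the form $\sigma(t)$, using the noetherian decomposition into irreducible components, and then to use sobriety of the spectral space $X$ to pick up a single object via a generic point argument.

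First, I would unfold what the classifying hypothesis gives for a closed subset. Since $Z$ is closed it is in particular specialisation closed, so the classifying bijection $\Theta$ assigns to it the semiprime ideal $\mcJ_Z = \Theta(Z) = \{t \in \sfT \mid \sigma(t) \subseteq Z\}$, and pushing back through $\Theta^{-1}$ rewrites $Z$ as $Z = \sigma(\mcJ_Z) = \bigcup_{t \in \mcJ_Z} \sigma(t)$. So every closed set is already expressible as a (potentially infinite) union of sets of the form $\sigma(t)$; the entire task is to collapse such a union down to a single $\sigma(t)$.

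Next, because $X$ is noetherian, $Z$ has only finitely many irreducible components, so I would decompose $Z = Z_1 \cup \cdots \cup Z_k$. Each $Z_i$ is itself closed, and the same bijection applied to $Z_i$ gives $Z_i = \bigcup_{t \in \Theta(Z_i)} \sigma(t)$. Since $X$ is spectral, hence sober, each irreducible $Z_i$ has a unique generic point $x_i$ with $\overline{\{x_i\}} = Z_i$. The point $x_i$ lies in some $\sigma(t_i)$ with $\sigma(t_i) \subseteq Z_i$; but $\sigma(t_i)$ is closed and contains $x_i$, so $\sigma(t_i) \supseteq \overline{\{x_i\}} = Z_i$, forcing $\sigma(t_i) = Z_i$. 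Setting $t = t_1 \oplus \cdots \oplus t_k$ and applying property (2) of Definition \ref{supportdatumdefn} then yields $\sigma(t) = \sigma(t_1) \cup \cdots \cup \sigma(t_k) = Z_1 \cup \cdots \cup Z_k = Z$.

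The subtle point worth highlighting is that a direct attempt to truncate $\bigcup_{t \in \mcJ_Z} \sigma(t)$ to a finite subunion fails in general noetherian spaces, as ascending chains of closed subsets need not stabilise (for example infinite sets of closed points on the affine line). The argument therefore has to route through the decomposition into irreducibles: it is sobriety, not any compactness consideration, that compresses an infinite union of closed sets into a single closed set on each irreducible piece, and noetherianity then assembles these finitely many pieces into $Z$ via the direct sum.
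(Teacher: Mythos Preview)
Your proof is correct and follows essentially the same route as the paper: reduce to irreducible closed subsets via the noetherian decomposition and the direct-sum property of $\sigma$, then use that an irreducible closed set is the closure of a point together with the bijection $\Theta^{-1}\Theta(Z_i)=\bigcup_{t\in\Theta(Z_i)}\sigma(t)$ to pin down a single object $t_i$ with $\sigma(t_i)=Z_i$. The only cosmetic difference is that you invoke sobriety explicitly to obtain the generic points, whereas the paper simply asserts that it suffices to treat closed sets of the form $\overline{\{x\}}$; your version is arguably clearer on this point.
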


\begin{proof}
This is the first claim of \cite[5.2]{BaSpec}. The proof is identical and included for completeness. As $X$ is noetherian, every closed subset has a finite number of irreducible components. Since $\sigma(t_1) \cup \sigma(t_2) \cup \cdots \cup \sigma(t_n)=\sigma(t_1\oplus t_2 \oplus \cdots \oplus t_n)$ for any finite collection of objects in $\sfT$, it therefore suffices to prove the lemma for closed sets of the form $Z=\overline{\{x\}}$ for some $x \in X$. As $(X,\sigma)$ is classifying we have 
\[
\overline{\{x\}}=Z=\Theta^{-1}\Theta(Z)=\bigcup_{t \in \Theta(Z)}\sigma(t).
\]
Therefore there exists $t \in \sfT$ such that $x \in \sigma(t)\subseteq Z$. Hence
\[
\overline{\{x\}}\subseteq \sigma(t) \subseteq Z = \overline{\{x\}},
\]
proving the lemma.
\end{proof}

\begin{cor}
Every open subset of $\Spc(\sfT)$ is of the form $U(t)$ for some object $t \in \sfT$.
\end{cor}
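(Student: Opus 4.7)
The plan is to apply Lemma \ref{closedaresupports} to the universal support datum $(\Spc(\sfT), \supp)$, and then pass to complements. To do this I must first verify that $(\Spc(\sfT), \supp)$ is itself a classifying support datum in the sense of the preceding definition.

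For the first condition, $\Spc(\sfT)$ is noetherian by the standing assumption, and spectral by the theorem of the previous section. For the second condition, I need a bijection between specialisation closed subsets $\mcX_\spe$ of $\Spc(\sfT)$ and semiprime ideals $\mcS$, given by $Y \mapsto \sfT_Y$ with inverse $\mcJ \mapsto \supp(\mcJ)$. This is exactly where I would invoke Theorem \ref{mtclassification}, which furnishes precisely this bijection but stated in terms of Thomason subsets rather than specialisation closed subsets. The key observation is that under the noetherian hypothesis these two notions agree: every open subset of $\Spc(\sfT)$ is quasi-compact by Remark \ref{noetherianconsequence}, so for any closed set $Z$ the complement $\Spc(\sfT) \setminus Z$ is automatically quasi-compact, hence every specialisation closed subset (a union of closed sets) is Thomason, and conversely every Thomason subset is specialisation closed by definition.

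Having identified $(\Spc(\sfT), \supp)$ as a classifying support datum, I apply Lemma \ref{closedaresupports}: every closed subset $Z \subseteq \Spc(\sfT)$ equals $\supp(t) = V(t)$ for some $t \in \sfT$. Taking complements in $\Spc(\sfT)$, every open subset is of the form $\Spc(\sfT) \setminus V(t) = U(t)$ for some $t \in \sfT$, which is the desired statement.

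I do not expect any significant obstacle here; the entire content is the identification of specialisation closed and Thomason subsets under the noetherian hypothesis, which is essentially immediate from Remark \ref{noetherianconsequence}. The only point requiring a moment's care is remembering that Theorem \ref{mtclassification} was phrased in terms of Thomason subsets, so one must explicitly note the equivalence with specialisation closed subsets before invoking the definition of classifying support datum.
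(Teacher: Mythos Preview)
Your proposal is correct and follows essentially the same route as the paper: verify that $(\Spc(\sfT),\supp)$ is a classifying support datum via Theorem~\ref{mtclassification}, apply Lemma~\ref{closedaresupports} to the complement of a given open set, and take complements. The only difference is that you spell out explicitly why Thomason and specialisation closed subsets coincide under the noetherian hypothesis, whereas the paper leaves this implicit when asserting that Theorem~\ref{mtclassification} makes $(\Spc(\sfT),\supp)$ classifying.
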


\begin{proof}
For $\Spc(\sfT)$ noetherian, Theorem \ref{mtclassification} tells us that $(\Spc(\sfT),\supp)$ is a classifying support datum on $\sfT$. Therefore given an open subset $U\subseteq \Spc(\sfT)$, Lemma \ref{closedaresupports} tells us that $\Spc(\sfT)\setminus U=\supp(t)$ for some object $t \in \sfT$. Then $U=\Spc(\sfT)\setminus \supp(t) = U(t)$. Under the noetherian assumption $U(t)$ is quasi-compact.
\end{proof}

The following theorem describes the universal property of the support datum $(\Spc(\sfT),\supp)$:

\begin{thm}\emph{\cite[4.2.2]{noncomttbasics}}\label{mtuniversal} Let $(X,\sigma)$ be a support datum on $\sfT$ such that $\sigma(t)$ is closed for every object $t \in \sfT$. Then there is a unique continuous map $f_\sigma:X\xrightarrow{} \Spc(\sfT)$ satisfying $\sigma(t)=f^{-1}_\sigma(\supp(t))$ for all $t \in \sfT$. In other words, $(\Spc(\sfT),\supp)$ is the final among all such support data. The map $f_\sigma$ is given by
\[
f_\sigma(x)=\{t \in \sfT \ \vert \ x \not\in \sigma(t)\}.
\]
\end{thm}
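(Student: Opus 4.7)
The plan is to verify, in sequence, four things: (a) the set $f_\sigma(x)$ is a prime ideal for each $x\in X$, so the formula lands in $\Spc(\sfT)$; (b) the pullback formula $\sigma(t)=f_\sigma^{-1}(\supp(t))$ holds; (c) $f_\sigma$ is continuous; (d) it is unique.

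For (a), I would fix $x \in X$ and check each closure property of $f_\sigma(x)=\{t\in\sfT\mid x\notin\sigma(t)\}$ directly from Definition \ref{supportdatumdefn}. Containing $0$ uses $\sigma(0)=\varnothing$; closure under $\Sigma$ and summands follows from properties (3) and (2); closure under extensions uses (4). Properness follows from $\sigma(\unit)=X$. For the two-sided ideal property, the key observation is that property (5) applied with $c=\unit$ gives
\[
\sigma(s\otimes t)=\sigma(s\otimes \unit \otimes t)\subseteq \bigcup_{c\in\sfT}\sigma(s\otimes c\otimes t)=\sigma(s)\cap\sigma(t)\subseteq\sigma(t),
\]
and symmetrically $\sigma(t\otimes s)\subseteq\sigma(t)$, so $t\in f_\sigma(x)$ forces $s\otimes t,\ t\otimes s\in f_\sigma(x)$. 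For primeness I use the object-level characterisation: if $a\otimes c\otimes b\in f_\sigma(x)$ for every $c$, then $x\notin\sigma(a\otimes c\otimes b)$ for all $c$, so $x\notin\bigcup_c \sigma(a\otimes c\otimes b)=\sigma(a)\cap\sigma(b)$ by property (5), hence $a\in f_\sigma(x)$ or $b\in f_\sigma(x)$.

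Step (b) is then a tautology: $x\in f_\sigma^{-1}(\supp(t))$ iff $t\notin f_\sigma(x)$ iff $x\in\sigma(t)$. Step (c) drops out immediately, since the sets $\supp(t)=V(t)$ form a basis of closed subsets of $\Spc(\sfT)$ and their preimages $\sigma(t)$ are closed by hypothesis. For (d), if $g\colon X\to \Spc(\sfT)$ is any continuous map with $\sigma(t)=g^{-1}(\supp(t))$, then $t\in g(x) \Leftrightarrow g(x)\notin\supp(t) \Leftrightarrow x\notin\sigma(t) \Leftrightarrow t\in f_\sigma(x)$, so $g(x)=f_\sigma(x)$ as sets, hence as primes.

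The main obstacle is verifying primeness in step (a). In the tt-setting one has the cleaner identity $\sigma(a\otimes b)=\sigma(a)\cap\sigma(b)$, but in the monoidal setting only the weaker statement in property (5) is available, expressing the intersection as a \emph{union} over insertions of an arbitrary object $c$. The proof therefore relies on deploying exactly the object-level formulation of prime given in Theorem 2.4(1), rather than any formulation involving only binary tensors, and this matching between the universal quantifier in the definition of primeness and the union in property (5) is precisely what makes the whole construction work.
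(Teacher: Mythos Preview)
The paper does not supply its own proof of this theorem: it is quoted verbatim from \cite[4.2.2]{noncomttbasics} and used as a black box, so there is nothing in the present paper to compare against. Your argument is correct and is essentially the standard one; in particular, your handling of primeness via the object-level criterion of Theorem~2.4(1), matching the universal quantifier over $c$ against the union in axiom~(5), is exactly the point of that characterisation and is how the result is proved in the cited source.
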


\begin{prop}\label{fsigmainjective}
If $(X,\sigma)$ is a classifying support datum on $\sfT$, then the universal map $f_\sigma: X \xrightarrow{} \Spc(\sfT)$ is injective.
\end{prop}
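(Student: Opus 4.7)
The plan is to exploit that, by the definition of a classifying support datum, the ambient space $X$ is spectral (hence $T_0$), and that by Lemma \ref{closedaresupports} every closed subset of $X$ is realised as $\sigma(t)$ for some object $t\in\sfT$. Together these two facts reduce injectivity of $f_\sigma$ to the standard $T_0$ criterion: two points with the same closure coincide.

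Concretely, suppose $f_\sigma(x)=f_\sigma(y)$ for points $x,y\in X$. Unwinding the formula
\[
f_\sigma(z)=\{t\in\sfT\ \vert\ z\not\in\sigma(t)\}
\]
from Theorem \ref{mtuniversal}, this equality of sets says that for every object $t\in\sfT$, one has $x\in\sigma(t)$ if and only if $y\in\sigma(t)$. In other words, $x$ and $y$ belong to exactly the same subsets of $X$ of the form $\sigma(t)$.

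Now I would invoke Lemma \ref{closedaresupports}: since $(X,\sigma)$ is classifying, every closed subset of $X$ is of the form $\sigma(t)$ for some $t\in\sfT$. Therefore $x$ and $y$ lie in exactly the same closed subsets of $X$, which is to say $\overline{\{x\}}=\overline{\{y\}}$. Since $X$ is spectral, it is in particular $T_0$, and so $x=y$. This gives the injectivity of $f_\sigma$.

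The argument is really just a short string of implications once Lemma \ref{closedaresupports} is in hand, so there is no serious obstacle; the only subtlety worth flagging is that the $T_0$ hypothesis is being used in the form ``same closure implies same point,'' which relies on the fact that closed subsets separate points in a $T_0$ space, and this is exactly what allows us to upgrade equality of the sets $f_\sigma(x)$ and $f_\sigma(y)$ to equality of the points $x$ and $y$.
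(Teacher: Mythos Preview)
Your proof is correct and slightly more direct than the paper's. The paper does not invoke Lemma \ref{closedaresupports} here; instead it introduces, for each $x\in X$, the specialisation-closed set $Y(x)=\{y\in X\mid x\notin\overline{\{y\}}\}$, checks that $\sigma(t)\subseteq Y(x)$ iff $x\notin\sigma(t)$, and concludes $\Theta(Y(x))=f_\sigma(x)$. Injectivity of $\Theta$ then forces $Y(x_1)=Y(x_2)$, hence $\overline{\{x_1\}}=\overline{\{x_2\}}$, and the $T_0$ hypothesis finishes as in your argument. Your route via Lemma \ref{closedaresupports} bypasses the auxiliary sets $Y(x)$ entirely and is arguably cleaner for injectivity alone. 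The trade-off is that the paper's approach establishes the identity $\Theta(Y(x))=f_\sigma(x)$ as a by-product, and this identity is reused verbatim in the proof of surjectivity (Proposition \ref{fsigmasurjective}); with your argument that identity would need to be proved separately when it is needed there.
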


\begin{proof}
This is the same as the proof of injectivity in \cite[5.2]{BaSpec} and is included for completeness. For $x \in X$ define $Y(x)=\{y \in X \ \vert \ x \not\in \overline{\{y\}}\}$. Clearly $Y(x)$ is specialisation closed. Fix an object $t \in \sfT$. We will show that $\sigma(t) \subseteq Y(x)$ if and only if $x \not\in \sigma(t)$. Since $x \not\in Y(x)$, if $\sigma(t)\subseteq Y(x)$ then $x\not\in \sigma(t)$. Conversely, as $\sigma(t)$ is specialisation closed, if $x \not\in \sigma(t)$ we have $x \not\in\overline{\{y\}}$ for all $y \in \sigma(t)$ and so by definition $\sigma(t)\subseteq Y(x)$. Therefore
\[
\Theta(Y(x))=\{t \in \sfT \ \vert \ \sigma(t)\subseteq Y(x)\} = \{t \in \sfT \ \vert \ x \not\in \sigma(t)\} = f_\sigma(x).
\]
As $(X,\sigma)$ is classifying, if $f_\sigma(x_1)=f_\sigma(x_2)$ then $Y(x_1)=Y(x_2)$ and $\overline{\{x_1\}}=\overline{\{x_2\}}$. The space $X$ must be $T_0$ as $(X,\sigma)$ is classifying, so $\overline{\{x_1\}}=\overline{\{x_2\}}$ implies $x_1=x_2$ and the map $f_\sigma$ is injective.
\end{proof}

\begin{prop}\label{fsigmasurjective}
    If $(X,\sigma)$ is a classifying support datum on $\sfT$, then the universal map $f_\sigma: X \to \Spc(\sfT)$ is surjective.
\end{prop}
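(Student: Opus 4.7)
The plan is to show every prime $\mcP \in \Spc(\sfT)$ lies in the image of $f_\sigma$ by exhibiting an explicit $x \in X$ with $f_\sigma(x) = \mcP$. Since $\mcP$ is prime it is in particular semiprime, so the classifying property provides a corresponding specialisation closed set $Y = \Theta^{-1}(\mcP) = \bigcup_{s \in \mcP}\sigma(s)$, with $t \in \mcP$ if and only if $\sigma(t) \subseteq Y$. Using the formula $f_\sigma(x) = \{t : x \notin \sigma(t)\}$, finding such an $x$ amounts to producing a point of $\bigl(\bigcap_{t \notin \mcP}\sigma(t)\bigr) \setminus Y$: membership in each $\sigma(t)$ for $t \notin \mcP$ gives the inclusion $\mcP \subseteq f_\sigma(x)$, and absence from $Y$ gives the reverse.

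The key intermediate step is the following finite intersection claim: for every finite family $t_1, \dots, t_n \in \sfT \setminus \mcP$, the closed set $\sigma(t_1) \cap \cdots \cap \sigma(t_n)$ is not contained in $Y$. By Lemma \ref{lemsuppintersection} this intersection equals the union of $\sigma(t_1 \otimes c_1 \otimes \cdots \otimes c_{n-1} \otimes t_n)$ over all $c_1, \dots, c_{n-1} \in \sfT$, so containment in $Y$ would force every iterated product $t_1 \otimes c_1 \otimes \cdots \otimes c_{n-1} \otimes t_n$ to lie in $\Theta(Y) = \mcP$. I would then induct on $n$, using the object-level characterisation of primeness from \cite[1.2.1]{noncomttbasics} (a proper thick ideal $\mcP$ is prime iff $A \otimes C \otimes B \in \mcP$ for every $C$ forces $A \in \mcP$ or $B \in \mcP$): fixing $c_2, \dots, c_{n-1}$ and setting $A = t_1$, $B = t_2 \otimes c_2 \otimes \cdots \otimes t_n$ with variable $C = c_1$ either peels off $t_1$ or reduces to a strictly shorter product still lying entirely in $\mcP$, and iterating forces some $t_i \in \mcP$ against the choice of the $t_i$.

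With the finite intersection claim in hand, the noetherian hypothesis on $X$ finishes the argument quickly. The collection of closed sets of the form $\bigcap_{t \in F}\sigma(t)$ with $F \subseteq \sfT \setminus \mcP$ finite is closed under finite intersection and satisfies the descending chain condition, so it admits a minimal element $Z$; minimality forces $Z \subseteq \sigma(t)$ for every $t \notin \mcP$, whence $Z = \bigcap_{t \notin \mcP}\sigma(t)$. Applying the finite intersection claim to the finite family defining $Z$ gives $Z \not\subseteq Y$, so any $x \in Z \setminus Y$ is the point we sought.

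The main obstacle I anticipate is the inductive step of the second paragraph: the noncommutative primeness criterion controls only products $A \otimes C \otimes B$ ranging over all intercalated $C$, which is exactly why Lemma \ref{lemsuppintersection} is phrased with those intercalated factors; the bookkeeping must preserve the condition that the shortened product still lies in $\mcP$ for every choice of the remaining variables, and this is the only place where one truly uses that $\mcP$ is prime rather than merely semiprime. Once this is handled, the rest of the proof follows the template of the symmetric case in \cite[5.2]{BaSpec} and is purely topological.
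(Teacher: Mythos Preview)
Your argument is correct, with one cosmetic slip: the two inclusions in the first paragraph are swapped. Membership of $x$ in every $\sigma(t)$ with $t\notin\mcP$ yields $f_\sigma(x)\subseteq\mcP$ (since $t\in f_\sigma(x)$ means $x\notin\sigma(t)$, hence $t\in\mcP$), while $x\notin Y$ yields $\mcP\subseteq f_\sigma(x)$. The inductive step you flagged as the main obstacle does go through: since $t_1\notin\mcP$ is independent of the intercalated objects, the object-level prime criterion forces $t_2\otimes c_2\otimes\cdots\otimes t_n\in\mcP$ for \emph{every} choice of $c_2,\dots,c_{n-1}$, which is exactly the hypothesis needed to run the induction on the shorter product.

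Your route differs from the paper's in an interesting way. The paper follows \cite[5.2]{BaSpec} more literally: it works with the family $\{\overline{\{x\}}:x\in X\setminus Y\}$, uses Lemma~\ref{closedaresupports} to write each closure as some $\sigma(s)$, and then invokes the \emph{ideal}-level definition of primeness (if $\mcI\otimes\mcJ\subseteq\mcP$ then $\mcI\subseteq\mcP$ or $\mcJ\subseteq\mcP$) to show any two such closures admit a common refinement in $X\setminus Y$; noetherianity then produces a minimal closure, whose generic point is the desired preimage. You instead work directly with the family $\{\sigma(t):t\notin\mcP\}$, replace the ideal-level primeness by the object-level criterion via Lemma~\ref{lemsuppintersection}, and use noetherianity on finite intersections. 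Your version is a bit more self-contained in that it never appeals to Lemma~\ref{closedaresupports}; the paper's version is closer to the geometric picture in the symmetric case and identifies the preimage explicitly as the generic point of the minimal closure. The two families are cofinal in one another (via Lemma~\ref{closedaresupports} one has $\overline{\{x\}}=\sigma(s)$ with $s\notin\mcP$), so the arguments are really two presentations of the same descending-chain trick.
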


\begin{proof}
    Fix a prime $\mcP \in \Spc(\sfT)$. As $(X,\sigma)$ is classifying, there exists a specialisation closed subset $Y\subset X$ such that $\mcP=\Theta(Y)$. As $\mcP$ is proper, the set $X\setminus Y$ is nonempty. Let $x,y \in X \setminus Y$. By Lemma \ref{closedaresupports}, there exist objects $s,t \in \sfT$ such that $\overline{\{x\}}=\sigma(s)$ and $\overline{\{y\}}=\sigma(t)$. Let $\mcI$ and $\mcJ$ denote the thick two-sided ideal generated by $s$ and $t$ respectively. By \cite[4.3.2]{noncomttbasics}, we have
    \begin{align*}
        \overline{\{x\}}&=\sigma(s)=\sigma(\mcI)\\
        \overline{\{y\}}&=\sigma(t)=\sigma(\mcJ)
    \end{align*}\
    and so neither $\mcI$ nor $\mcJ$ are contained in $\mcP$. As $\mcP$ is prime, $\mcI \otimes \mcJ \not\subseteq \mcP$ and so $\sigma(\mcI \otimes \mcJ) \not\subseteq Y$. Therefore there exist a point $z \in X \setminus Y$ such that 
    \[z \in \sigma(\mcI \otimes \mcJ)= \overline{\{x\}}\cap \overline{\{y\}} \]
    and hence
    \[\overline{\{z\}}\subseteq\overline{\{x\}}\cap\overline{\{y\}}\]
    As in \cite[5.2]{BaSpec}, as the spectrum is noetherian, the non-empty family of sets
    \[\{\overline{\{x\}}\ \vert \ x \in X\setminus Y\}\]
    admits a minimal element which must be the lower bound for inclusion. That is, there exists $x \in X\setminus Y$ such that, for all $y \in X \setminus Y$ we have $x \in \overline{\{y\}}$. Hence 
    \[X \setminus Y \subseteq \{y \in X \ \vert \ x \in \overline{\{y\}}\} \]
    The reverse inclusion holds because $x \not\in Y$, and $Y$ is specialisation closed. Therefore
     \[X \setminus Y = \{y \in X \ \vert \ x \in \overline{\{y\}}\} \]
     and so
     \[Y=\{y \in X \ \vert \ x \not\in \overline{\{y\}}\}=Y(x)\]
     Hence
     \[\mcP=\Theta(Y)=\Theta(Y(x))=f_\sigma(x)\]
     where the final equality is demonstrated in the proof of Proposition \ref{fsigmainjective}. We conclude that $f_\sigma$ is surjective.
\end{proof}

\begin{thm}
Let $(X,\sigma)$ be a classifying support datum on $\sfT$. Then the universal map $f_\sigma: X \xrightarrow{} \Spc(\sfT)$ is a homeomorphism.
\end{thm}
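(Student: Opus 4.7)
The plan is to assemble the homeomorphism from the pieces already proved. By Proposition \ref{fsigmainjective} and Proposition \ref{fsigmasurjective}, the universal map $f_\sigma$ is a bijection, and Theorem \ref{mtuniversal} guarantees that $f_\sigma$ is continuous. Hence the only thing left is to verify that $f_\sigma$ is a closed map (equivalently, that its set-theoretic inverse is continuous).

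The key observation is that classifying support data have a very rigid closed-set structure on $X$: by Lemma \ref{closedaresupports}, every closed subset $Z \subseteq X$ is of the form $Z = \sigma(t)$ for some $t \in \sfT$. Combined with the defining relation $\sigma(t) = f_\sigma^{-1}(\supp(t))$ from Theorem \ref{mtuniversal}, this lets us compute images of closed sets directly. Namely, for any closed $Z \subseteq X$, pick $t \in \sfT$ with $Z = \sigma(t) = f_\sigma^{-1}(\supp(t))$; then since $f_\sigma$ is surjective we have
\[
f_\sigma(Z) = f_\sigma\bigl(f_\sigma^{-1}(\supp(t))\bigr) = \supp(t),
\]
which is closed in $\Spc(\sfT)$ by Lemma \ref{smallnoncomsupp}. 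Thus $f_\sigma$ is a closed map.

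A continuous closed bijection is a homeomorphism, completing the proof. There is no real obstacle at this stage because the hard work has already been done: establishing injectivity relied on the $T_0$ property and the classification of semiprime ideals via $Y(x)$, while surjectivity used both the classification and the noetherian hypothesis to produce a generic point $x$ with $Y = Y(x)$. With those in hand, the homeomorphism statement is a formal consequence of the fact that, under the classifying hypothesis, the closed subsets of $X$ are exhausted by the supports $\sigma(t)$.
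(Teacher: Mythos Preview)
Your proposal is correct and follows essentially the same approach as the paper: combine injectivity (Proposition~\ref{fsigmainjective}), surjectivity (Proposition~\ref{fsigmasurjective}), and continuity (Theorem~\ref{mtuniversal}), then use Lemma~\ref{closedaresupports} together with $\sigma(t)=f_\sigma^{-1}(\supp(t))$ to conclude that $f_\sigma$ is closed. Your explicit use of surjectivity to justify $f_\sigma\bigl(f_\sigma^{-1}(\supp(t))\bigr)=\supp(t)$ is a welcome clarification of a step the paper leaves implicit.
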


\begin{proof}
By Proposition \ref{fsigmainjective} the universal continuous map $f_\sigma$ is injective, and by Proposition \ref{fsigmasurjective} the map is surjective. Therefore $f_\sigma$ is bijective. By Theorem \ref{mtuniversal} we have $\sigma(t)=f_\sigma^{-1}(\supp(t))$ for all $t \in \sfT$. Therefore $f_\sigma(\sigma(t))=\supp(t)$ and so $f_\sigma$ is a closed map as by Lemma \ref{closedaresupports} every closed subset of $X$ is of the form $\sigma(t)$ for some $t \in \sfT$. We therefore conclude that $f_\sigma$ is a homeomorphism.
\end{proof}

\printbibliography

\end{document}